\title{Multiplicative differential algebraic $K$-theory and applications}
\author{Ulrich Bunke\thanks{{Fakult\"at f\"ur Mathematik},
Universit{\"a}t Regensburg,
93040 Regensburg,
GERMANY, ulrich.bunke@mathematik.uni-regensburg.de}
~and
Georg Tamme\thanks{Fakult\"at f\"ur Mathematik, Universit\"at Regensburg, 93040 Regensburg, GERMANY, georg.tamme@mathematik.uni-regensburg.de}
}
\date{October 29, 2015}
\newcommand{\hbK}{{\widehat{\mathbf{K}}}}
\newcommand{\oben}[1]{\!\phantom{.}^{#1}}
\newcommand{\CAlg}{\mathbf{CAlg}}
 \newcommand{\hPic}{{\widehat{\tt Pic}}}
\newcommand{\bs}{{\mathbf{s}}}
\newcommand{\can}{{\tt can}}
\newcommand{\Gal}{{\tt Gal}}
\newcommand{\IDR}{{\mathbf{IDR}}}
\newcommand{\SemiRing}{{\mathbf{Rig}}}
\newcommand{\Ring}{{\mathbf{Ring}}}
\newcommand{\Spec}{\mathtt{Spec}}
\newcommand{\beil}{{\tt r^{\mathrm Beil}}}
\newcommand{\cI}{{\mathcal{I}}}
\newcommand{\GrRings}{\mathbf{GrRings}}
\newcommand{\bbA}{{\mathbb{A}}}
\newcommand{\cL}{{\mathcal{L}}}
\newcommand{\cW}{{\mathcal{W}}}
\newcommand{\hcycl}{\widehat{\tt cycl}}
\newcommand{\bK}{{\mathbf{K}}}
\newcommand{\Reg}{{\mathbf{ Reg}}}
\newcommand{\cO}{{\mathcal{O}}}
\newcommand{\cU}{{\mathcal{U}}}
\newcommand{\cD}{{\mathcal{D}}}
 \newcommand{\Cone}{{\tt Cone}}
 \newcommand{\DR}{\mathbf{DR}}
 \newcommand{\Vect}{{\tt Vect}}
 \newcommand{\CommGroup}{{\mathbf{CommGroup}}}
 \newcommand{\CommMon}{{\mathbf{CommMon}}}
 \newcommand{\Cat}{{\mathbf{Cat}}}
\DeclareMathOperator{\hdeg}{\widehat{deg}}
\DeclareMathOperator{\des}{des}
\renewcommand{\Re}{\operatorname{Re}}
\renewcommand{\Im}{\operatorname{Im}}
\numberwithin{equation}{section}
\newtheorem{theorem}[equation]{Theorem}
\newtheorem{prop}[equation]{Proposition}
\newtheorem{lem}[equation]{Lemma}
\newtheorem{ddd}[equation]{Definition}
\newtheorem{kor}[equation]{Corollary}
\theoremstyle{remark}
\newtheorem{rem}[equation]{Remark}
\newtheorem{ex}[equation]{Example}
\begin{document}

\maketitle
\begin{abstract}
We construct a version of Beilinson's regulator as a map of sheaves of commutative ring
spectra and use it to define a multiplicative variant of differential algebraic $K$-theory. We
use this theory to give an interpretation of Bloch's construction of $K_3$-classes and the relation
with dilogarithms. Furthermore, we provide a relation to Arakelov theory via the arithmetic
degree of metrized line bundles, and we give a proof of the formality of the algebraic $K$-theory
of number rings.
\end{abstract}

\tableofcontents

\section{Introduction}


Let $X$ be an arithmetic scheme, i.e.~a regular separated scheme of finite type over the integers. Its algebraic $K$-theory $K_{*}(X)$ is an object of fundamental interest in arithmetic. 
 The algebraic $K$-theory of $X$  is connected with  the  absolute Hodge cohomology $H^{*}_{\cA\cH}(X,\R(\bullet))$ via a Chern character map, called the Beilinson regulator
\[
K_{i}(X) \to H^{2p-i}_{\cA\cH}(X,\R(p))\ , \quad p,i\geq 0\ .
\]
An important, but extremely difficult problem is to construct $K$-theory classes and to compute their images under the regulator map.

The papers \cite{2013arXiv1306.0247B, buta} initiated a new approach to this problem. The idea is to represent algebraic $K$-theory classes of $X$ by bundles on $M\times X$ for smooth manifolds $M$.  In greater detail this goes as follows. The $K$-groups of $X$ are the homotopy groups of the algebraic $K$-theory spectrum $\bK(X)$.  This spectrum defines a cohomology theory $\bK(X)^{*}$ on topological spaces so that, e.g., $\bK(X)^{0}(S^{n})\cong K_{0}(X)\oplus K_{n}(X)$. The cohomology theory $\bK(X)^{*}$ admits a differential refinement    denoted by $\hbK\oben{*}(M\times X)$. This differential algebraic $K$-theory is a functor of two variables, a smooth manifold $M$ and a scheme $X$ as above.  A class $\hat x\in \hbK\oben{*}(M\times X)$ combines the information of a class $x\in \bK(X)^{*}(M)$ and a   differential form on the manifold $M\times X(\C)$ {representing} the image of $x$ under Beilinson's regulator with secondary data. Thus, if we know a differential refinement $\hat x$ of $x$ then, 
philosophically, it is easy to calculate 
the Beilinson regulator of $x$.


The tool to construct differential algebraic $K$-theory classes  {is the cycle map}.  {It produces such classes from bundles on $M\times X$ equipped with additional geometric data.}
Here a  bundle on $M\times X$ is a  {vector bundle on the ringed space $(M\times X, \pr_X^{-1}\cO_X)$. The geometric extra structure is a hermitian metric and a connection on the associated complex vector bundle on $M\times X(\C)$.}
The differential form representing the Beilinson regulator  {of the corresponding $K$-theory class} is obtained using standard Chern-Weil  theory.

The aim of the present paper is to develop a multiplicative version of differential algebraic
$K$-theory and to illustrate it in some applications. The  cup product allows us to construct new classes from given ones, but more interestingly, we will  employ the  secondary information
captured by the differential algebraic $K$-theory in an essential way.

In order to achieve this goal we need a version of Beilinson's regulator on the level of ring spectra. Here our result is not completely satisfactory, as we have to replace absolute Hodge cohomology by the weaker analytic Deligne cohomology, which coincides with the former only for proper schemes.
We construct a sheaf of ring spectra $\bK$ on the site consisting of products of a smooth manifold and an arithmetic scheme such that $\pi_*(\bK(M\times X)) \cong \bK(X)^{-*}(M)$. To this end we apply a    suitable group completion machine to the category of vector bundles on the ringed space $(M\times X, \pr_X^{-1}\cO_X)$. We furthermore construct a sheaf of differential graded algebras $\IDR$ which computes analytic Deligne cohomology and use characteristic forms on vector bundles on the manifolds $M\times X(\C)$ to construct a map of sheaves of ring spectra ($H$ denotes the Eilenberg-MacLane functor)
\[
\beil\colon \bK \to H(\IDR)
\]
which on homotopy groups agrees with the Beilinson regulator. This is the main new contribution of the paper.

Once the multiplicative Beilinson regulator is established,  we introduce 
the multiplicative differential algebraic $K$-theory and a multiplicative version of the cycle map in Section \ref{juni1010}.

The remainder of the present paper is devoted to applications and illustrating how classical
constructions from arithmetic fit into the framework of differential algebraic $K$-theory.

In Section \ref{juni1002} we use  multiplicative differential algebraic $K$-theory in order to
construct a secondary invariant from the Steinberg relation. As an application we give a
conceptual explanation of Bloch's construction of elements in $K_{3}$ of a  number ring
from cycles in the Bloch
complex, whose images under the regulator
map can be described explicitly in terms of the dilogarithm function.

In Arakelov theory one studies metrized line bundles on number rings and their arithmetic degree. We explain in Section \ref{juni1003} how this can be understood entirely in  the framework of differential algebraic $K$-theory.
 
Finally, in Section \ref{jun1004} we show that the real homotopy type of the algebraic $K$-theory  spectrum $\bK(\Spec(R))$ of rings of integers $R$ in number fields is modelled by the commutative algebra $K_{*}(R)$ in a way which is natural in $R$.  The precise formulation of this result is  Theorem \ref{juni1001}  and  uses the notion of formality introduced in Definition \ref{fkwelfwefwefewfewfefe}. 

 \bigskip

 { 
{\it Acknowledgements.} We would like to thank the referee and Chuck Weibel for several comments. They helped to improve the exposition of the paper a lot.
}

\section{Multiplicative theory}

%

In this section we define algebraic $K$-theory as a sheaf $\bK$ of commutative ring spectra on a
  site of products of a smooth manifold and a regular scheme (see Section \ref{sec:sites}
below). We furthermore define a sheaf of differential graded algebras $\IDR$ which calculates the
analytic Deligne cohomology (Section \ref{sec:IDR}).

The main result is the construction of a version of Beilinson's regulator with values
in analytic Deligne cohomology as a map between sheaves of ring spectra
\[
\beil\colon \bK \to H(\IDR)
\]
where $H(\IDR)$ is the Eilenberg-MacLane spectrum associated to $\IDR$ (Theorem \ref{may2760})
using multiplicative characteristic forms (Section~\ref{sec:geometries}).

Throughout the paper we use the language of $(\infty,1)$-categories as developed by Lurie
\cite{MR2522659} and simply called $\infty$-categories in the following. We view an ordinary
category as an $\infty$-category by taking its nerve.

\subsection{The sites}
\label{sec:sites}

We let $\Mf$ denote the category of smooth manifolds with the open covering topology. 
Here a smooth manifold is a smooth manifold with corners locally modeled on $[0,\infty)^{n}\subset \R^{n}$, $n\in \nat$. The category $\Mf$  contains manifolds with boundary and  is closed under products. $\Mf$ in particular  contains the interval $I=\Delta^{1}=[0,1]$ and the standard simplices $\Delta^{p}$ for all $p\in \nat$.
We let $\Reg_{\Z}$ denote the category of regular separated schemes of finite type over $\Spec(\Z)$ with the topology of Zariski open coverings. 
Manifolds and schemes are combined in the product
$\Mf\times \Reg_{\Z}$  of  these sites.

Let $\bC$  be a presentable $\infty$-category \cite[Ch.~5]{MR2522659}. 
We can consider the $\infty$-category of functors $ \Fun((\Mf\times \Reg_{\Z})^{op},\bC)$. Objects
in this $\infty$-category will also be referred to as \emph{presheaves}.
\begin{ddd}
An object $F\in \Fun((\Mf\times \Reg_{\Z})^{op},\bC)$ satisfies descent if
$F$ sends disjoint unions to products and 
for every covering $\cU$ of an object   $M\times X\in \Mf\times \Reg_{\Z}$  the natural map 
$$F(M{\times X}) \to \lim_{\Delta^{op}} F(\cU_{{\bullet}})$$
is an equivalence, where $\cU_{{\bullet}}\in (\Mf\times \Reg_{\Z})^{\Delta^{{op}}}$ denotes the \v{C}ech nerve of $\cU$.
\end{ddd}
We write
$\Fun^{desc}((\Mf\times \Reg_{\Z})^{op},\bC)$
for the full subcategory of objects satisfying descent. These objects will be called \emph{sheaves}.
 The inclusion as a full subcategory admits a left adjoint $L$ called
sheafification \cite[Lemma~6.2.2.7]{MR2522659}. We express this by the diagram
\[
L\colon \Fun((\Mf\times \Reg_{\Z})^{op},\bC)\leftrightarrows \Fun^{desc}((\Mf\times
\Reg_{\Z})^{op},\bC).
\]

We will also need the notion of homotopy invariance (in the manifold direction), which should not be
confused with $\bbA^{1}$-homotopy invariance in the algebraic direction.
Let $I:=[0,1]$ be the unit interval.

\begin{ddd}
An object
$F\in \Fun((\Mf\times \Reg_{\Z})^{op},\bC)$ is homotopy invariant (in the manifold direction),
if the natural map
\[
\pr^{*}\colon F(M\times X)\to  F(I\times M\times X)
\]
is an equivalence for every object   $M\times X\in \Mf\times \Reg_{\Z}$.
\end{ddd}
We write
$\Fun^{{I}}((\Mf\times \Reg_{\Z})^{op},\bC)$ for the full subcategory of homotopy invariant
objects. We again have an adjunction
\[
\cH^{pre}\colon \Fun((\Mf\times \Reg_{\Z})^{op},\bC) \leftrightarrows \Fun^{{I}}((\Mf\times \Reg_{\Z})^{op},\bC)
\]
and $\cH^{pre}$ is called the \emph{homotopification}.
We denote by $\Fun^{desc,I}((\Mf\times \Reg_{\Z})^{op},\bC)$ the full subcategory of
presheaves satisfying both, homotopy invariance and descent. Then we have a commutative diagram in
$\infty$-categories
$$
\xymatrix{
\Fun^{desc,I}((\Mf\times \Reg_{\Z})^{op},\bC) \ar[d]\ar[r]&\Fun^{desc}((\Mf\times
\Reg_{\Z})^{op},\bC)\ar[d]\\\Fun^{I}((\Mf\times \Reg_{\Z})^{op},\bC)\ar[r]&\Fun((\Mf\times
\Reg_{\Z})^{op},\bC)
}
$$
where all morphisms are inclusions of full subcategories. Taking adjoints we get a commutative diagram of localizations, 
\begin{equation}
\begin{split}
\label{gshjgshjgjsgjwqsq}
\xymatrix{
\Fun^{desc,I}((\Mf\times \Reg_{\Z})^{op},\bC)  &\Fun^{desc}((\Mf\times
\Reg_{\Z})^{op},\bC)\ar[l]^{\cH} \\
\Fun^{I}((\Mf\times\Reg_{\Z})^{op},\bC)\ar[u]^{L_{{I}}} & \Fun((\Mf\times \Reg_{\Z})^{op},\bC) \ . \ar[u]^{L}\ar[l]^{\cH^{pre}}
}
\end{split}
\end{equation}
In order to see that the horizontal adjunctions  exists one can use  identifications of the form
$$ \Fun((\Mf\times \Reg_{\Z})^{op},\bC)\simeq  \Fun(\Mf^{op},\Fun(\Reg_{\Z}^{op},\bC))$$
and refer to \cite[Section~2]{bnv}. Then diagram \eqref{gshjgshjgjsgjwqsq} shows
 that sheafification commutes with homotopification  in the sense that $L_I\circ
\cH^{pre} \simeq \cH \circ L$. 
 Here $L_{I}$ and $\cH$ are the sheafification and the homotopification functors on the
respective subcategories. It is not   clear that $\cH$ is the restriction of
$\cH^{pre}$. Again, we refer  to \cite[Section 2]{bnv} for more details.

Note that any functor $\Phi:\bC\to \bC^{\prime}$ between presentable $\infty$-categories induces a functor
${\Phi_*\colon} \Fun((\Mf\times \Reg_{\Z})^{op},\bC)\to\Fun((\Mf\times \Reg_{\Z})^{op},\bC^{\prime})$ which
preserves homotopy invariant objects. In contrast, $\Phi_*$  preserves sheaves in general only if $\Phi$ commutes with limits. 
We will usually write $\Phi$ for $\Phi_{*}$ in order simplify the notation.

 {Later, we will need the following explicit description of the homotopification.}
We first define a functor
$$\bs:\Fun((\Mf\times \Reg_\Z)^{op},\bC)\to \Fun((\Mf\times \Reg_\Z)^{op},\Fun(\Delta^{op},\bC))$$
as the adjoint of
$$(\Mf\times \Reg_\Z)^{op} \times \Delta^{op}\to   (\Mf\times \Reg_\Z)^{op}\ , (M\times X\times [p])\mapsto \Delta^{p}\times M\times X\ ,$$
where $\Delta^{p}\in \Mf$ denotes the $p$-dimensional standard simplex.
We further set
\begin{equation}\label{apr1902}
\bar \bs:=\colim_{\Delta^{op}} \circ \bs:\Fun((\Mf\times \Reg_\Z)^{op},\bC)\to\Fun((\Mf\times \Reg_\Z)^{op},\bC)\ .
\end{equation}

\begin{lem}
\begin{enumerate}
\item There is a natural map $\id\to \bar  \bs$.
\item If $X\in {\Fun  ((\Mf\times \Reg_\Z)^{op},\bC)}$ is homotopy invariant, then the natural map $X\to \bar \bs(X)$ is an equivalence.
\item If $f$ is a morphism in ${\Fun((\Mf\times \Reg_\Z)^{op},\bC)}$  such that $\bar \bs(f)$ is an equivalence, then $\cH^{pre}(f)$ is an equivalence.
\item The map $\id\to \bar \bs$ is equivalent to the unit of the homotopification $\id\to \cH^{pre}$ on
$\Fun((\Mf\times \Reg_{\Z})^{op},\bC)$.
\end{enumerate}
\end{lem}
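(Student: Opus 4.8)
The plan is to work with the explicit formula $\bar\bs(F)(M\times X)\simeq\colim_{[p]\in\Delta^{op}}F(\Delta^p\times M\times X)$, i.e. to regard $\bar\bs(F)$ as the realization of the simplicial object $\bs(F)$, and to prove the four assertions in the given order. For (1) I would use the augmentation of the cosimplicial manifold $\Delta^{\bullet}$ over $\Delta^{0}=\pt$ given by the projections $\Delta^{p}\to\pt$. Applying $F$ turns this into a map of simplicial objects $\const(F)\to\bs(F)$ out of the constant simplicial object at $F$. Since $\Delta^{op}$ has a terminal object $[0]$ its nerve is weakly contractible, so $\colim_{\Delta^{op}}\const(F)\simeq F$; taking $\colim_{\Delta^{op}}$ of $\const(F)\to\bs(F)$ then produces the natural map $\id\to\bar\bs$.

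For (2) the key sub-step is that homotopy invariance of $F$ forces smoothly homotopic morphisms (in the manifold direction) to induce the same map: if $f_{0},f_{1}$ are joined by a smooth homotopy $H$, then $f_{i}^{*}=i_{i}^{*}H^{*}$, and $i_{0}^{*}=i_{1}^{*}=(\pr^{*})^{-1}$ because $\pr^{*}$ is an equivalence and $\pr\circ i_{0}=\pr\circ i_{1}=\id$. As each $\Delta^{p}$ is convex, hence smoothly contractible onto a vertex, the projection $\Delta^{p}\times M\times X\to M\times X$ is a smooth homotopy equivalence in the manifold direction, so $\pr^{*}\colon F(M\times X)\to F(\Delta^{p}\times M\times X)$ is an equivalence. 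Thus $\const(F)\to\bs(F)$ is a levelwise equivalence and induces an equivalence $F\xrightarrow{\sim}\bar\bs(F)$ on realizations.

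The technical heart of the lemma, and the step I expect to be the main obstacle, is that $\bar\bs(F)$ is homotopy invariant for every $F$ -- equivalently, that $\bar\bs$ sends smoothly homotopic morphisms to equivalent maps. I would prove this exactly as one proves homotopy invariance of singular homology: a smooth homotopy $\Delta^{1}\times M\to M'$ induces, via the prism decomposition of $\Delta^{p}\times\Delta^{1}$, a simplicial homotopy between the two induced maps $\bs(F)(M'\times X)\to\bs(F)(M\times X)$, which collapses to an honest homotopy after realization since $|\Delta^{1}|$ is contractible. Applying this to the smooth contraction $(s,(t,m))\mapsto(st,m)$ of $I\times M$ onto $M$ shows that $\pr^{*}\colon\bar\bs(F)(M\times X)\to\bar\bs(F)(I\times M\times X)$ is an equivalence. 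Making the prism/simplicial-homotopy bookkeeping precise in the $\infty$-categorical setting -- or replacing it by an Eilenberg--Zilber/Fubini reduction of $\bar\bs\bar\bs(F)$ to the contractibility of the $\Delta^{n}\times\Delta^{n}$ -- is where the real work lies.

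Granting this, (3) and (4) are formal. Dually to (2), for a homotopy invariant $Z$ the vertex inclusions and projections of $\Delta^{p}$ induce inverse equivalences on mapping spaces into $Z$, so the cosimplicial space $[p]\mapsto\operatorname{Map}(F(\Delta^{p}\times-),Z)$ is homotopically constant with value $\operatorname{Map}(F,Z)$; since $\Delta$ has initial object $[0]$, its limit $\operatorname{Map}(\bar\bs(F),Z)$ is computed at $[0]$, and the structure map is exactly $\eta_{F}^{*}\colon\operatorname{Map}(\bar\bs(F),Z)\xrightarrow{\sim}\operatorname{Map}(F,Z)$. For (3), if $\bar\bs(f)$ is an equivalence then applying $\operatorname{Map}(-,Z)$ to the naturality square of $\eta$ and using $2$-out-of-$3$ shows $f^{*}\colon\operatorname{Map}(F',Z)\to\operatorname{Map}(F,Z)$ is an equivalence for every homotopy invariant $Z$, that is, $\cH^{pre}(f)$ is an equivalence. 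For (4), the main step shows that $\bar\bs$ takes values in homotopy invariant presheaves, while the equivalence $\eta_{F}^{*}$ just established exhibits $\eta_{F}$ as the universal arrow from $F$ to a homotopy invariant object; by the characterization of reflective localizations \cite[5.2.7.4]{MR2522659} this identifies $\bar\bs$ together with $\id\to\bar\bs$ with the homotopification $\cH^{pre}$ and its unit.
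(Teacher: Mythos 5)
Your proposal is correct in substance, but it takes a genuinely different route from the paper: the paper gives essentially no proof at all. It states that (4) implies (1)--(3), calls those implications exercises (with details deferred to \cite[4.29]{skript}), and for (4) simply cites \cite[Lemma 7.5]{bnv}. You invert this logic: you prove (1) and (2) directly via the augmentation $\const(F)\to \bs(F)$ and levelwise equivalences, establish the key fact that $\bar\bs(F)$ is always homotopy invariant by the prism/simplicial-homotopy argument, and then deduce (3) and (4) formally from the universal property of reflective localizations \cite[5.2.7.4]{MR2522659}. In effect you have reconstructed the content of the cited reference: the proof of \cite[Lemma 7.5]{bnv} proceeds along exactly these lines (homotopy invariance of $\bar\bs$ via the classical singular-homology-style homotopy, then a localization criterion), and your observation that the prism bookkeeping is unproblematic is right for the right reason --- the simplicial homotopy data lives entirely in the $1$-category $\Mf$, where the prism identities hold strictly, and is then transported by precomposition through the $\infty$-functor $F$. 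What the paper's approach buys is brevity and a clean division of labor (the hard analytic-free content is isolated in a quotable external lemma); what yours buys is a self-contained argument and a transparent identification of where the real work sits, which the paper's one-line proof hides.

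Two small slips in your justifications, neither fatal. First, $[0]$ is \emph{terminal} in $\Delta$, hence \emph{initial} in $\Delta^{op}$; neither category has the object you attribute to it. Your conclusions survive because a category with an initial (or terminal) object has weakly contractible nerve, and the colimit of a constant diagram over a weakly contractible category is its value. Second, and for the same reason, the limit over $\Delta$ of the cosimplicial mapping space is \emph{not} ``computed at $[0]$'' by an initial-object argument; rather, you should use what you in fact set up, namely that the projections give a map between this diagram and the constant diagram at $\operatorname{Map}(F,Z)$ which is a levelwise equivalence (using homotopy invariance of $Z$ and the contraction of $\Delta^{p}$ onto a vertex, as in your step (2)), whence the limit is $\operatorname{Map}(F,Z)$ by weak contractibility of the nerve of $\Delta$; the leg at $[0]$ then identifies the comparison map with $\eta_F^*$ as you claim.
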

\begin{proof} 
The last statement implies the first three, which are exercises.  Details can be found in
\cite[{4.29}]{skript}. For (4) we refer to \cite[Lemma 7.5]{bnv}.
\end{proof}

\subsection{The multiplicative Deligne complex}
\label{sec:IDR}

We consider the site of smooth complex varieties $\Sm_{\C}$ with the Zariski topology  and the
product
$\Mf\times \Sm_{\C}$.
We denote by $\Ch$ the  1-category of complexes of abelian groups considered as
$\infty$-category
and by $\Ch[W^{-1}]$ its localization with quasi-isomorphisms inverted.  
We have  the sheaf of complexes $A\in  \Fun^{desc}((\Mf\times \Sm_{\C})^{op},\Ch)$ of complex
valued  {smooth}
differential forms.  It contains the
subsheaf of complexes of real valued forms $A_{\R}$. Obviously, $A \cong A_{\R}\otimes_{\R} \C$. 
The sheaf of complexes $A$ furthermore has a decreasing Hodge filtration $\cF$
such that elements in $\cF^pA(M\times X)$ are locally of the form
\[
\sum_{I,J,K, |J|\geq p} \omega_{I,J,K}\: dx^I \wedge dz^J \wedge d\bar z^K
\]
where the $z_j$'s are local holomorphic coordinates on $X$ and the $x_i$'s are local coordinates on $M$
(in contrast to \cite[Section 4.2]{buta}, we forget the
$\log$-condition and the weight filtration). 
Since, degree-wise, these sheaves of complexes 
consist of modules over the sheaf of smooth functions, they satisfy descent, i.e.~they
are sheaves when considered as objects in $\Fun((\Mf\times
\Sm_{\C})^{op},\Ch[W^{-1}])$ (see \cite[Lemma 7.12]{bnv} for an argument). 
By the Poincar\'e Lemma they are also homotopy invariant.

We let
$B\colon \Reg_{\Z}\to \Sm_{\C}$ {be the functor mapping a} scheme $X$ to the smooth complex variety $X\times_\Z \C$. 
Then $(\id\times B)^{*} A\in {\Fun^{desc}((\Mf\times\Reg_{\Z})^{op},\Ch)}$ has a $\Gal(\C/\R)$-action which preserves the  Hodge
filtration. 
The sheaf of complexes $\DR (p)\in  \Fun^{desc}((\Mf\times \Reg_{\Z})^{op},\Ch)$  is
defined by
$$
\DR (p):=[(\id\times B)^{*}\DR_\C (p)]^{\Gal(\C/\R)}\ ,
$$ 
where  
\[
\DR_\C (p):= \Cone \left(  (2\pi i)^{p}   A_{\R}\oplus   \cF^{p} A \xrightarrow{\alpha\oplus\beta\mapsto \alpha-\beta}  A \right)[2p-1] \ .
\]
 Here $(\,.\,)^{\Gal(\C/\R)}$ denotes the object-wise fixed points under the group $\Gal(\C/\R)$. 
Note that all sheaves that appear above have in fact values in complexes of real vector spaces.   Furthermore, taking
invariants under the finite group  $\Gal(\C/\R)$ is an exact functor on real vector spaces with $\Gal(\C/\R)$-action. Therefore, taking  $\Gal(\C/\R)$-invariants
  preserves the descent and homotopy invariance conditions. Consequently, 
we can consider $\DR(p)\in  \Fun^{desc,I}((\Mf\times \Reg_{\Z})^{op},\Ch{[W^{-1}]})$
\begin{rem}\label{aug0601}
For a smooth complex variety $X$,
the complex $\DR_{\C}(p)(X)$ calculates the analytic Deligne cohomology $H^{*}_{\cD,\mathrm{an}}(X,
\R(p))$ up to a shift of degrees by $2p$. If, in the definition of the cone, one replaces the
complexes of smooth forms $A_{\R}, A$ by their $\log$-versions $A_{\R,\log}, A_{\log}$ (consisting
of forms which extend  to some compactification of $X$ with logarithmic poles along the boundary of
$X$, see~\cite[Section 4.2]{buta}) one obtains the so-called Beilinson-Deligne or weak absolute
Hodge cohomology $H^{*}_{B\cD}(X,\R(p))$. There is a natural map $H^{*}_{B\cD}(X,\R(p)) \to
H^{*}_{\cD,\mathrm{an}}(X, \R(p))$ which, in general, is neither injective nor surjective. It is an
isomorphism if $X$ is also proper over $\C$. If one moreover introduces the weight filtration
$\hat\cW$ and replaces $A_{\R,\log}, A_{\log}$ by the subcomplexes $\hat\cW_{2p}A_{\R,\log},
\hat\cW_{2p}A_{\log}$, one obtains the absolute Hodge cohomology $H^{*}_{\cA\cH}(X,\R(p))$
introduced by Beilinson \cite{BeilinsonHodgeCohom}. This is 
the cohomology theory used in \cite{buta}. It follows from Deligne's theory of weights that the natural map $H^{*}_{\cA\cH}(X,\R(p)) \to H^{*}_{B\cD}(X,\R(p))$ is an isomorphism in degrees $* \leq p$, and in degrees $*\leq 2p$ if $X$ is proper. 
\end{rem}

 In the following, we define  a sheaf
$\IDR(p)\in \Fun^{desc}((\Mf\times \Reg_{\Z})^{op},\Ch)$ which is  object-wise quasi-isomorphic  to $\DR(p)$, and  which is better behaved with respect to the multiplicative structures. 
We define the morphism
$$\cI:\Mf\to \Mf\ , \quad M\mapsto [0,1]\times M\ .$$  It induces a corresponding morphism
$\cI\times \id_{\Sm_\C}:\Mf\times \Sm_\C\to \Mf\times \Sm_\C$.
For a presheaf $\cF$ on $\Mf\times \Sm_\C$ we define
$\cI\cF:=(\cI\times \id_{\Sm_\C})^{*}\cF$.
\begin{ddd}\label{jun1302}
We define  
$$\IDR_{\C}(p)\subseteq \cI  A [2p]$$ 
to be the subsheaf with values in $\Ch$ determined by the conditions that
$\omega\in \IDR_{\C}(p)(M\times X)$ iff
\begin{enumerate}
\item $\omega_{|\{0\}\times M {\times X}}\in   (2\pi i)^{p}    A_{\R}(M\times X)[2p]$
\item $\omega_{|\{1\}\times M {\times X}}\in  \cF^{p} A(M\times X)[2p]$\ .
\end{enumerate}
We set $\IDR_{\C}:=\prod_{p\ge 0} \IDR_{\C}(p)$
 and  define
$$\IDR :=[(\id\times B)^{*} \IDR_\C]^{\Gal(\C/\R)}\ .$$
\end{ddd} 
{An algebraic analog of this complex was used by Burgos and Wang \cite{BurgosWang}.}
\begin{prop}
There is an object-wise quasi-isomorphism
\begin{equation}\label{may2350}
q:\IDR  (p)\to \DR  (p).
\end{equation}
\end{prop}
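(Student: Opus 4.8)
The plan is to build $q$ out of the two endpoint restrictions together with fibre integration over the interval, and then to recognize both complexes as models for one and the same homotopy pull‑back. Write $r_{0},r_{1}$ for the restrictions of a form to $\{0\}\times M\times X(\C)$ and $\{1\}\times M\times X(\C)$. By the very definition of $\IDR_{\C}(p)$, a form $\omega\in\IDR_{\C}(p)(M\times X)$ satisfies $r_{0}\omega\in(2\pi i)^{p}A_{\R}[2p]$ and $r_{1}\omega\in\cF^{p}A[2p]$, while fibre integration produces $\int_{[0,1]}\omega\in A[2p-1]$. I would therefore set
$$q_{\C}(\omega):=\bigl(r_{0}\omega,\;r_{1}\omega,\;\textstyle\int_{[0,1]}\omega\bigr),$$
which, by the degree count, lands in the summands $(2\pi i)^{p}A_{\R}[2p]\oplus\cF^{p}A[2p]\oplus A[2p-1]$ underlying $\DR_{\C}(p)=\Cone(\dots)[2p-1]$. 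All three operations are $\Gal(\C/\R)$‑equivariant, so after passing to invariants $q_{\C}$ descends to the asserted map $q\colon\IDR(p)\to\DR(p)$.

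The first thing to verify is that $q_{\C}$ is a chain map. The restrictions $r_{0},r_{1}$ commute with $d$, so the only discrepancy comes from $\int_{[0,1]}$, and this discrepancy is governed by Stokes' theorem for fibre integration, $\int_{[0,1]}d\omega\pm d\int_{[0,1]}\omega=r_{1}\omega-r_{0}\omega$. This is precisely the term $\alpha-\beta$ appearing in the differential of the cone defining $\DR_{\C}(p)$; hence the boundary contributions are absorbed exactly by the cone differential, and $q_{\C}$ becomes a morphism of complexes once the signs in front of $\int_{[0,1]}$ imposed by the shift $[2p-1]$ are fixed.

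To prove that $q_{\C}$ is a quasi‑isomorphism I would compare two short exact sequences. Object‑wise, evaluation at the endpoints gives a surjection
$$0\to K\to\IDR_{\C}(p)\xrightarrow{(r_{0},r_{1})}(2\pi i)^{p}A_{\R}[2p]\oplus\cF^{p}A[2p]\to 0,$$
surjectivity being witnessed by the interpolation $\omega=(1-t)\,\pr^{*}\alpha+t\,\pr^{*}\beta$, and with kernel $K$ the subcomplex of forms whose pull‑backs to $\partial[0,1]\times M\times X(\C)$ vanish. On the other side, the cone defining $\DR_{\C}(p)$ sits in its tautological sequence
$$0\to A[2p-1]\to\DR_{\C}(p)\to\bigl((2\pi i)^{p}A_{\R}\oplus\cF^{p}A\bigr)[2p]\to 0.$$
By construction $q_{\C}$ maps the first sequence to the second, inducing the identity on the quotients and restricting on the kernels to $\int_{[0,1]}\colon K\to A[2p-1]$.

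Everything then reduces to showing that $\int_{[0,1]}\colon K\to A[2p-1]$ is an object‑wise quasi‑isomorphism, which is the relative Poincaré lemma for the interval: forms on $[0,1]\times N$ vanishing on $\partial[0,1]\times N$ compute $H^{*}([0,1]\times N,\partial[0,1]\times N)\cong H^{*-1}(N)$, the isomorphism being realized exactly by fibre integration. Granting this, the five lemma applied to the long exact cohomology sequences of the two short exact sequences shows that $q_{\C}$ is a quasi‑isomorphism, and since taking $\Gal(\C/\R)$‑invariants is exact so is $q$. Conceptually, the two sequences merely exhibit $\DR_{\C}(p)$ (the cone model) and $\IDR_{\C}(p)$ (the path/interval model) as the homotopy pull‑back of $(2\pi i)^{p}A_{\R}\to A\leftarrow\cF^{p}A$, with $q$ the induced comparison. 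The main obstacle is bookkeeping rather than conceptual: pinning down the signs so that the Stokes correction matches the cone differential after the shift $[2p-1]$, and checking that both the interpolation lift and the kernel description are $\Gal(\C/\R)$‑equivariant so that the comparison survives passage to invariants.
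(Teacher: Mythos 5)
Your proof is correct, and it uses the very same comparison map as the paper --- $q_{\C}(\omega)=\bigl(\omega|_{\{0\}}\oplus\omega|_{\{1\}},\,-\int_{[0,1]}\omega\bigr)$, with the sign you left open fixed in the paper as $-\int_{[0,1]}$, verified to be a chain map by the same Stokes computation, and descended to $\Gal(\C/\R)$-invariants by the same exactness argument --- but it runs the five-lemma comparison through a genuinely different pair of short exact sequences. The paper \emph{enlarges} both complexes: it maps the sequence $0\to\IDR_{\C}(p)\to\cI A[2p]\to S\oplus T\to 0$ (with $S=A/(2\pi i)^{p}A_{\R}[2p]$ and $T=A/\cF^{p}A[2p]$, the second map being endpoint evaluation followed by the quotients) to $0\to\DR_{\C}(p)\to\Cone(A\oplus A\to A)[2p-1]\to S\oplus T\to 0$, using the same formula as $q_{\C}$ on the middle terms; there the contractibility of $[0,1]$ (relative Poincar\'e lemma) makes the middle map a quasi-isomorphism and the right-hand map is the identity. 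You instead \emph{quotient out} the endpoint data: both complexes surject onto $\bigl((2\pi i)^{p}A_{\R}\oplus\cF^{p}A\bigr)[2p]$ --- your interpolation $(1-t)\pr^{*}\alpha+t\pr^{*}\beta$ correctly witnesses degreewise surjectivity, which is all an exact sequence of complexes requires --- and the induced map on kernels is fibre integration from forms with vanishing pullback to $\{0,1\}\times M\times X(\C)$ to $A[2p-1]$, a quasi-isomorphism by the relative Poincar\'e lemma in its relative-cohomology form. Both routes thus reduce to a Poincar\'e-lemma input plus the five lemma; yours has the merit that the map on quotients is literally the identity (no separate middle-map verification) and that it makes explicit the conceptual point that $\IDR_{\C}(p)$ and $\DR_{\C}(p)$ are the path-space and cone models of the homotopy pullback of $(2\pi i)^{p}A_{\R}\to A\leftarrow\cF^{p}A$, whereas the paper's enlargement avoids working with the endpoint-vanishing kernel and instead applies the interval's contractibility to the full complex $\cI A[2p]$. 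The only loose ends in your write-up are the acknowledged sign bookkeeping and the (routine) observation that the kernel of $(r_{0},r_{1})$ computed inside $\IDR_{\C}(p)$ coincides with the endpoint-vanishing forms in $\cI A[2p]$, which holds because the defining conditions of $\IDR_{\C}(p)$ constrain only the endpoint restrictions.
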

\begin{proof}
We define a morphism  of sheaves of complexes 
\begin{equation}\label{may2350nnn}
q_\C\colon \IDR_\C (p)\to \DR_\C (p)
\end{equation} 
as follows.
A form $\omega \in \IDR_\C (p)(M)$ gives rise to forms
\begin{enumerate}
\item
$\omega_\R:=\omega_{|\{0\}\times M {\times X}}\in  (2\pi i)^{p}     A_{\R}(M\times X)[2p]$
\item 
$\omega_\cF:=\omega_{|\{1\}\times M {\times X}}\in \cF^{p}    A(M\times X)[2p]$
\item
$\tilde \omega:=\int_{[0,1]\times M{\times X}/M{\times X}}\omega \in   A(M\times
X)[2p-1]$.
\end{enumerate}
We define 
$$
q_\C (\omega):=(\omega_\R\oplus \omega_\cF,{-}\tilde \omega)\in \DR_{\C}(M\times X)\ .
$$ 
We have 
$$dq_\C(\omega)=d(\omega_\R\oplus \omega_\cF,-\tilde \omega)=(d\omega_\R\oplus d\omega_\cF,d\tilde\omega+\omega_\R-\omega_\cF) $$
and 
\begin{gather*}
q_\C(d\omega) = (d\omega_\R\oplus d\omega_\cF, -\int_{[0,1]\times M/M} d\omega) = \\
= (d\omega_\R\oplus d\omega_\cF, d \int_{[0,1]\times M/M} \omega +\omega_\R - \omega_\cF) = (d\omega_\R\oplus d\omega_\cF, d\tilde\omega +\omega_\R - \omega_\cF),
\end{gather*}
a calculation using Stokes' theorem. Hence $q_{\C}$ is a map of complexes.
\begin{lem}\label{may2320}
For every $p\ge 0$ 
the map $q_\C :\IDR_{\C}(p)\to \DR_{\C}(p)$  is an object-wise quasi-isomorphism.
\end{lem} 
\begin{proof}
We abbreviate
$$S:= A /(2\pi i)^p A_{\R}[2p]\ , \quad 
T:= A/ \cF^{p} A_{\R}[2p]\ .$$
Then we have an exact sequence
\begin{equation}\label{may2310}0\to \IDR (p) \to \cI A[2p] \to S\oplus T\to 0\ ,\end{equation}
where the first map is the inclusion and the second is given by the evaluation at the end points of the interval.
We further have a natural exact sequence
\begin{equation}\label{may2311}0\to \DR (p) \to \Cone(A\oplus A\to A)[2p-1]\to \Cone(S\oplus T\to 0)[-1]\to 0\ .\end{equation}
We define a map of exact sequences $\eqref{may2310} \to \eqref{may2311}$
using the map $q_\C $ in the first entry, the same formula as for $q_{{\C}}$ in the second, and 
the obvious identity map at the last entry. Since the interval
$[0,1]$ is contractible it follows from the relative Poincar\'e Lemma that the middle map is a quasi-isomorphism. Since the last map is an isomorphism, it follows from the Five Lemma that $q_\C $ is a quasi-isomorphism, too. 
\end{proof}

We observe that $(\id\times B)^{*}q_\C $   commutes with the $\Gal(\C/\R)$-action and therefore induces an equivalence  $q  :\IDR(p)\to \DR(p)$, too. This finishes the proof of the Proposition.
\end{proof}

It follows from
Lemma \ref{may2320} and the sheaf and homotopy invariance properties of $\DR $ that we can consider
$$\IDR \in \Fun^{desc,I}((\Mf\times \Reg_{\Z})^{op},\Ch[W^{-1}])\ .$$

We now observe that the filtration  $\cF$ as well as the real subspaces  are compatible with the multiplication $$\wedge : A \otimes A  \to A \ .$$ We therefore get products
$$\wedge :\IDR (p)\times \IDR  (q)\to \IDR (p+q)\ .$$
Taking the product over all $p$, we get as final result:
\begin{kor}\label{jun1301}
The product
$$\IDR :=\prod_{p\ge 0}\IDR (p)$$
has the structure of a sheaf of bi-graded graded commutative $d$-algebras.
\end{kor}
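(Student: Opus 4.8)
The plan is to produce the whole structure by restricting the commutative differential graded algebra structure of the sheaf of forms $A$ to the subsheaves $\IDR(p)$ and checking that the endpoint conditions of Definition \ref{jun1302} are stable under the wedge product. Recall first that $A\in\Fun^{desc}((\Mf\times\Sm_\C)^{op},\Ch)$ is a sheaf of commutative dgas: the wedge product $\wedge\colon A\otimes A\to A$ is associative, graded commutative, and satisfies the Leibniz rule for the de Rham differential. Pulling back along $\cI\times\id_{\Sm_\C}$ and along $\id\times B$ preserves all of this, so $(\id\times B)^{*}\cI A$ inherits the structure of a sheaf of commutative dgas, and the two endpoint restriction maps to $\{0\}\times M\times X$ and $\{1\}\times M\times X$, being pullbacks along closed immersions, are morphisms of graded algebras.

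First I would check that the wedge product descends to a map $\IDR_\C(p)\times\IDR_\C(q)\to\IDR_\C(p+q)$. Composing $\wedge$ with the shift identification $\cI A[2p]\otimes\cI A[2q]\simeq(\cI A\otimes\cI A)[2(p+q)]$ sends a pair $(\omega,\eta)$ to $\omega\wedge\eta\in\cI A[2(p+q)]$, and it remains to verify the two defining conditions. Since the endpoint restrictions are multiplicative, the restriction of $\omega\wedge\eta$ to $\{0\}$ equals the wedge of the restrictions, which lies in $(2\pi i)^{p}A_\R\wedge(2\pi i)^{q}A_\R\subseteq(2\pi i)^{p+q}A_\R$ because real forms are closed under $\wedge$ and the powers of $2\pi i$ multiply; likewise the restriction to $\{1\}$ lies in $\cF^{p}A\wedge\cF^{q}A\subseteq\cF^{p+q}A$ since the Hodge filtration is multiplicative. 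Hence $\omega\wedge\eta\in\IDR_\C(p+q)$, and taking the product over all $p,q$ yields a bi-graded product on $\IDR_\C:=\prod_p\IDR_\C(p)$, additive in the weight.

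The remaining algebra axioms --- associativity, unitality, graded commutativity and the Leibniz rule --- hold on $\IDR_\C$ because they hold on $A$; the only point needing care is the sign bookkeeping introduced by the shifts by $2p$, which I regard as the sole (minor) obstacle. The key observation is that these shifts are by \emph{even} integers, so all Koszul signs coming from the shift identifications are trivial and the product is literally the wedge product. An element of cohomological degree $n$ in $\IDR_\C(p)$ is a form of actual degree $n+2p$, and for a second element of degree $m$ in $\IDR_\C(q)$ one has $(n+2p)(m+2q)\equiv nm\pmod 2$; therefore the Koszul sign of the wedge product equals the sign $(-1)^{nm}$ prescribed by the cohomological bi-grading, and graded commutativity holds with the expected sign. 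For the same parity reason the Leibniz rule is inherited unchanged.

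Finally I would descend to $\IDR=[(\id\times B)^{*}\IDR_\C]^{\Gal(\C/\R)}$. As noted above the $\Gal(\C/\R)$-action is by automorphisms of the sheaf of forms that preserve the real structure and the Hodge filtration, hence by automorphisms of the dga structure; therefore the product restricts to the objectwise fixed points and defines a product on $\IDR$. Being defined objectwise and naturally in $M\times X$, it is automatically a morphism of sheaves, and this equips $\IDR$ with the asserted structure of a sheaf of bi-graded graded commutative $d$-algebras.
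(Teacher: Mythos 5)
Your proposal is correct and follows essentially the same route as the paper, which simply observes that the real subspaces $(2\pi i)^{p}A_{\R}$ and the Hodge filtration $\cF$ are compatible with the wedge product $\wedge\colon A\otimes A\to A$, so that the product restricts to maps $\IDR(p)\times\IDR(q)\to\IDR(p+q)$ and descends through the $\Gal(\C/\R)$-fixed points. Your additional verifications --- that the endpoint restrictions are algebra maps and that the shifts by the even integers $2p$ introduce no Koszul signs --- are details the paper leaves implicit, and they are carried out correctly.
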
 
We denote the symmetric monoidal $\infty$-categories of chain complexes and chain complexes 
with quasi-isomorphisms inverted with the tensor product by $\Ch^{\otimes}$ and
$\Ch[W^{-1}]^{\otimes}$,  {respectively}. The notation for  commutative algebra objects is $\CAlg$. 
Commutative differential graded algebras are objects of $\CAlg(\Ch^{\otimes})$. They can be considered as  objects in $\CAlg(\Ch[W^{-1}]^{\otimes})$.
Since the forgetful
functor $\CAlg(\Ch[W^{-1}]^{\otimes}) \to \Ch[W^{-1}]^{\otimes}$ is a right adjoint, limits in
commutative algebras are computed on underlying objects. Consequently,  $\IDR$  can   naturally be  considered as an object
\begin{equation}\label{descentIDR}
\IDR \in \Fun^{desc,I}((\Mf\times \Reg_{\Z})^{op},\CAlg  (\Ch[W^{-1}]^{\otimes})).
\end{equation}

\subsection{Geometries  {and characteristic forms}}
\label{sec:geometries}

We first consider $M\times X\in \Mf\times \Sm_\C$. We view $M \times X$ as a locally ringed space
with structure sheaf $\cO_{M\times X} := \pr_X^{-1}\cO_X$ given by the inverse image of the sheaf
$\cO_X$ under the projection to $X$. A sheaf of finitely generated locally free ${\cO_{M\times
X}}$-modules   will be called a bundle on $M\times X$. 
If $V$ is a bundle on $M\times X$ we have an associated complex vector bundle on $M\times X(\C)$
which we abusively denote by the same symbol. It naturally carries a flat partial connection
$\nabla^I$ in the $M$-direction and a holomorphic structure $\bar\partial$ in the $X$-direction,
which is constant with respect to $\nabla^I$, i.e.~$[\nabla^I,\bar\partial]=0$.
\begin{ddd}[see~{\cite[Def.~4.12]{buta}}]
A geometry on the bundle $V$ is given by a pair $(h^V,\nabla^{II})$ consisting of a hermitian
metric $h^V$ on $V$ and a partial connection $\nabla^{II}$ in the $X$-direction that extends the
holomorphic structure $\bar\partial$.
\end{ddd}

We form the connection $\nabla:=\nabla^{I}+\nabla^{II}$ and let  $\nabla^{u}$ be its unitarization
with respect 
to $h^{V}$. In \cite{buta} we use these connections in order to define a characteristic form in $\DR (M\times X)$. In the present paper we adjust the notion of a geometry such that we obtain a lift of the characteristic form to $\IDR (M\times X)$, see Lemma \ref{jun1305}.

Let $\pr:I\times M\times X\to M\times X$ denote the projection.
 \begin{ddd}\label{jun1601}
An extended geometry {$g$} on $V$ is a triple 
${g=}(( h^{V},\nabla^{II}),{\widetilde\nabla})$  {consisting}
of a  geometry on  $V$ and a connection ${\widetilde\nabla}$ on
$\pr^{*}V$ 
such that
\begin{enumerate}
\item ${\widetilde\nabla}_{|\{0\}\times M\times X}=\nabla^{u}$
\item ${\widetilde\nabla}_{|\{1\}\times M\times X}=\nabla$\ .
\end{enumerate}
\end{ddd}
 
We now consider the arithmetic situation $M\times X\in \Mf\times \Reg_{\Z}$.
We keep calling a  sheaf of finitely generated locally free ${\cO_{M\times X}}$-modules a bundle. 
For the notion of $\Gal(\C/\R)$-invariance in the following definition we refer to \cite[Definition 4.31]{buta}.
 \begin{ddd}\label{jun1602}
 An extended  geometry {$g$} on a bundle
$V$ {on $M\times X\in \Mf\times \Reg_{\Z}$} is a {$\Gal(\C/\R)$}-invariant extended  geometry {$g$} on the  bundle $(\id\times B)^{*}(V)$.
\end{ddd}
Geometries  and extended geometries exist and can be glued with partitions of unity on $M$.
Compared with \cite{buta} the situation is simplified since we drop the condition of being good.
 Examples are given by the canonical extensions:
\begin{ddd}\label{jun2202} 
{Given a  geometry $(h^V,\nabla^{II})$ on the bundle $V$, we}
define the {associated} canonical extended geometry 
\begin{equation*}
\can(h^{V},\nabla^{{II}}):=((h^{V},\nabla^{{II}}),{\widetilde\nabla})
\end{equation*}
by taking for ${\widetilde\nabla}$ the linear path from $\nabla^{u}$ to $\nabla$.
\end{ddd}

For any $M\times X\in \Mf\times \Reg_\Z$ we denote the groupoid of bundles with extended geometry on
$M\times X$ {and isomorphisms respecting the extended geometry} by $i\Vect^{exge}(M\times X)$.

For a closed symmetric monoidal presentable $\infty$-category $\bC^\otimes$  we denote by
$\SemiRing(\bC^\otimes)$ the $\infty$-category of semiring objects in $\bC$
(see \cite[Def.~7.1]{2013arXiv1305.4550G}). The typical example of a semiring in $\Set^{\times}$
is the semiring of integers $\nat$.
 We let $\Cat[W^{-1}]^\times$ be the $\infty$-category of
categories with categorical equivalences inverted, equipped with its cartesian symmetric monoidal
structure.  {A semiring in $\Cat[W^{-1}]^\times$ will be called a Rig-category.}
Then  {a typical Rig-category} is the category of vector spaces over some field
 with the operations $\oplus$ and $\otimes$. This follows
  from the recognition principle \cite[Thm. 8.8]{2013arXiv1305.4550G}.  This principle implies that, using
direct sum and tensor product of bundles with geometry, we can consider $i\Vect^{exge}$ as a
sheaf of Rig-categories
\[
i\Vect^{exge}\in \Fun^{desc}((\Mf\times \Reg_{\Z})^{op},\SemiRing(\Cat[W^{-1}]^{\times}))\ . 
\] 
We furthermore interpret   $\pi_{0}(i\Vect^{exge})$ and $Z^{0}(\IDR)$ as presheaves of  semirings
$$\pi_{0}(i\Vect^{exge}), \ Z^{0}(\IDR)\in \Fun((\Mf\times \Reg_{\Z})^{op},\SemiRing(\Set^{\times}))\ .$$

We let $R^\nabla$ denote  the  curvature of a connection $\nabla$.
 Furthermore, by
\[
\ch_{2p}(\nabla) := \left[\Tr \exp(-R^\nabla)\right]_{2p} =(-1)^{p}\:\Tr (R^{\nabla})^{p}
\] 
we denote
the component of the unnormalized Chern character form in degree $2p$.
\begin{ddd}\label{may2730}
We define the  transformation of presheaves of semirings
$${\widetilde \omega}\colon\pi_{0}(i\Vect^{exge})\to  Z^{0}(\IDR)$$
by
$${\widetilde \omega}(V,g):=\prod_{p\ge 0} \ch_{2p}({\widetilde \nabla})\ .$$
\end{ddd}
A priori, 
$$
\prod_{p\ge 0}\ch_{2p}({\widetilde \nabla})\in \prod_{p\ge 0}\cI {A}(M\times B(X)) ,
$$ 
but the conditions for $\widetilde \nabla$ at the end-points of the interval immediately imply 
 that this product of forms  belongs to the subcomplex
 $\IDR (X\times M)$ defined in \ref{jun1302}.

In \cite{buta}, for a bundle $V$ with a  geometry $g$ we defined a characteristic form 
\begin{equation}\label{apr1901}
\omega((V,(h^V,\nabla^{II}))):= \prod_p (\ch_{2p}(\nabla^u)\oplus\ch_{2p}(\nabla), \tilde\ch_{2p-1}(\nabla^u,\nabla))
\end{equation}
where the last form denotes the transgression \cite[(66)]{buta}. This is compatible with our new construction in the sense of the lemma below.
We let $i\Vect^{geom}$ denote the symmetric monoidal stack of bundles with    geometries on $\Mf\times \Reg_{\Z}$ and geometry preserving isomorphisms.\footnote{Note that in \cite{buta} this symbol has a different meaning.}
Then the formula \eqref{apr1901} gives a map
$\omega\colon \pi_{0}(i\Vect^{ geom})\to Z^{0}(\DR)$. 
The construction of the canonical extended geometry in Definition \ref{jun2202} induces a map
$$
\can\colon\pi_{0}(i\Vect^{geom} )\to \pi_{0}( i\Vect^{exge}), 
$$
which is additive, but not multiplicative.

\begin{lem}\label{jun1305}
The diagram 
\begin{equation}\label{may2740}
\xymatrix{
\pi_{0}(i\Vect^{exge} )\ar[r]^-{{\widetilde \omega}}&Z^{0}(\IDR) \ar[d]^{{q}}\\
\pi_{0}(i\Vect^{geom} )\ar[u]^{{\can}} \ar[r]^-{\omega}&Z^{0}(\DR) 
}
\end{equation}
commutes.
\end{lem}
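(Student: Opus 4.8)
The plan is to unwind every arrow in the diagram and thereby reduce commutativity to the standard fact that the secondary (transgression) Chern character form equals, up to a sign, the fiberwise integral over the interval of the Chern character form of the linearly interpolating connection.

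First I would fix a bundle with geometry $(V,(h^{V},\nabla^{II}))$ and form the connection $\nabla=\nabla^{I}+\nabla^{II}$ together with its unitarization $\nabla^{u}$. By Definition~\ref{jun2202} the map $\can$ sends this datum to the extended geometry whose auxiliary connection $\widetilde\nabla$ on $\pr^{*}V$ over $I\times M\times X$ is the linear path from $\nabla^{u}$ to $\nabla$, so that $\widetilde\nabla_{|\{0\}\times M\times X}=\nabla^{u}$ and $\widetilde\nabla_{|\{1\}\times M\times X}=\nabla$. By Definition~\ref{may2730} the upper composite of the diagram is therefore
\[
\widetilde\omega(\can(V,(h^{V},\nabla^{II})))=\prod_{p\ge 0}\ch_{2p}(\widetilde\nabla).
\]

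Next I would apply $q$ componentwise using the explicit formula \eqref{may2350nnn} for $q_{\C}$. Restricting $\ch_{2p}(\widetilde\nabla)$ to the two endpoints of the interval yields exactly $\ch_{2p}(\nabla^{u})$ and $\ch_{2p}(\nabla)$, so in each degree $2p$ the first component of $q(\widetilde\omega(\can(V,g)))$ agrees on the nose with the first component $\ch_{2p}(\nabla^{u})\oplus\ch_{2p}(\nabla)$ of $\omega(V,g)$ from \eqref{apr1901}. It then remains to match the secondary components, i.e.~to establish
\[
-\int_{[0,1]\times M\times X/M\times X}\ch_{2p}(\widetilde\nabla)=\tilde\ch_{2p-1}(\nabla^{u},\nabla).
\]
Since $\ch_{2p}(\widetilde\nabla)$ is closed, the Stokes' theorem computation preceding Lemma~\ref{may2320} shows that the left-hand side is a primitive of $\ch_{2p}(\nabla)-\ch_{2p}(\nabla^{u})$; as the right-hand side is by \cite[(66)]{buta} the transgression form for the very same linear path, the two forms coincide.

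The one genuine point in this identification is the sign: the minus sign built into $q_{\C}$ in \eqref{may2350nnn} was chosen precisely to reproduce the sign convention for the transgression used in \cite{buta}, and I expect checking this bookkeeping against \cite[(66)]{buta} to be the only real obstacle. Finally, all the data above---the metric and the connections $\nabla^{u},\nabla,\widetilde\nabla$, hence all the characteristic forms---are defined $\Gal(\C/\R)$-equivariantly on $(\id\times B)^{*}V$, and $q$ is induced from the equivariant map $q_{\C}$; so the identity, which I would first prove on $\IDR_{\C}(p)$ and $\DR_{\C}(p)$, descends after taking $\Gal(\C/\R)$-invariants to an equality in $Z^{0}(\DR)(M\times X)$. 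As the whole argument is pointwise in $(V,g)$ and manifestly natural in $M\times X$, it gives the equality of transformations $q\circ\widetilde\omega\circ\can=\omega$, so that \eqref{may2740} commutes.
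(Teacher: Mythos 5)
Your proposal is correct and takes essentially the same route as the paper: the paper's proof is a one-line appeal to exactly the three ingredients you unwind, namely the definition of $q$ in \eqref{may2350}, the construction of the transgression $\tilde\ch_{2p-1}(\nabla^{u},\nabla)$ as the fiberwise integral of $\ch_{2p}$ of the linear-path connection, and the formula \eqref{apr1901} for $\omega$. The sign bookkeeping you flag is indeed the only content, and the minus sign in $q_{\C}$ in \eqref{may2350nnn} is built in precisely to match the convention of \cite[(66)]{buta}, as you correctly surmise.
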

\begin{proof}
This follows from the definition of $q$ in \eqref{may2350}, the construction of the transgression
$\tilde\ch_{2p-1}(\nabla^u,\nabla)$, and the definition of $\omega$ in \eqref{apr1901}.
\end{proof}

\subsection{The multiplicative $K$-theory sheaf and the regulator}
\label{sec:regulator}

In this section, we define algebraic $K$-theory as a sheaf of commutative ring spectra on $\Mf \times \Reg_\Z$.
To do so, we use the multiplicative version of group completion studied in
\cite{2013arXiv1305.4550G} (see in particular their Proposition 8.2).
We denote by $\Sp^{\wedge}$ (resp.~$\Sp^{\geq 0,\wedge}$) the symmetric monoidal
$\infty$-category of spectra (resp.~connective spectra) with the
smash product. The category $\Sp$ is the stable $\infty$-category generated by the sphere spectrum whose homotopy category is the stable homotopy category. For the purpose of the present paper we do not have to fix a particular model for $\Sp$. We will use the identification of $\infty$-categories
$\CommGroup(\sSet[W^{-1}]^{\times})\simeq \Sp^{\ge 0,\wedge}$ which identifies a connective spectrum with its $\infty$-loop space. This equivalence refines to an equivalence of $\infty$-categories
\begin{equation}\label{kdaksjdklasd}
\Ring(\sSet[W^{-1}]^{\times})\simeq \CAlg(\Sp^{\ge 0,\wedge})\ .
\end{equation}

\begin{ddd}
We define the $K$-theory functor
\[
K\colon \SemiRing(\Cat[W^{-1}]^{{\times}})\to \CAlg(\Sp^{\wedge})
\]
as the composition
\begin{align*}
\SemiRing(\Cat[W^{-1}]^{\times})&\xrightarrow{\Nerve} \SemiRing(\sSet[W^{-1}]^{\times}) && \text{(Nerve)} \\
&\to \Ring(\sSet[W^{-1}]^{\times}) && \text{(Ring completion)}\\
&\xrightarrow{\simeq}  \CAlg(\Sp^{\ge 0,\wedge}) &&  {\text{(using \eqref{kdaksjdklasd})}}\\
&\to \CAlg(\Sp^{\wedge})&&\text{(forget connectivity)}  .
\end{align*}
\end{ddd}
We consider the sheaf 
$$i\Vect\in \Fun^{desc,I}((\Mf\times \Reg_{\Z})^{op},\SemiRing(\Cat[W^{-1}]^{\times}))$$   which associates to each object
$M\times X$ the Rig-category of bundles over $M\times X$ and isomorphisms.
\begin{ddd} \label{jun1320}
We define the sheaf of $K$-theory spectra by 
$$\bK:= L(K(i\Vect))\in 
\Fun^{desc,I}((\Mf\times \Reg_{\Z})^{op},\CAlg(\Sp^{\wedge}))$$
\end{ddd} 
\begin{rem}
For $X\in \Reg_\Z$, the homotopy groups of the spectrum $\bK(X) := \bK(*\times X)$ are the usual
$K$-groups
of $X$ as defined by Quillen. This follows from the known facts that, for affine $X$, Quillen's
$K$-theory coincides with $K$-theory defined by group completion and that, on $\Reg_\Z$, Quillen's
$K$-theory satisfies Zariski-descent (see \cite[Section 3.3]{buta} for more details).

In general, the spectrum $\bK(X)$ represents a generalized cohomology theory and, for a
manifold $M$, we have
\[
\pi_*\left(\bK(M\times X)\right) \cong \bK(X)^{-*}(M)
\]
(see~\cite[Section 4.5]{buta}).
\end{rem}

Note that the homotopy invariance of $i\Vect$ implies the homotopy invariance of {$K(i\Vect)$}.
In contrast, 
$i\Vect^{exge} $ is not homotopy invariant. But applying  the presheaf
homotopification $\bar\bs \simeq \cH^{pre}$ from \eqref{apr1902} we get:
\begin{lem}\label{may2710}
The natural `forget the geometry' map
$$
\bar \bs\  \Nerve(i\Vect^{exge})\to  \bar\bs \ \Nerve(i\Vect) \simeq \Nerve(i\Vect)
$$
is an equivalence in 
 $\Fun ((\Mf\times \Reg_{\Z})^{op},\SemiRing(\sSet[W^{-1}]^{\times}))$. 
\end{lem}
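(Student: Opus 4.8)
The plan is to prove that forgetting the extended geometry becomes an equivalence after homotopification, using the explicit model $\bar\bs \simeq \cH^{pre}$ from \eqref{apr1902}. The strategy is to exhibit, for each fixed object $M\times X$, a simplicial homotopy that contracts the extra geometric data, so that after applying $\colim_{\Delta^{op}}\circ\, \bs$ the fibers of the forgetful functor become contractible. Concretely, I would work object-wise and analyze the map on the level of the bisimplicial objects $\bs\,\Nerve(i\Vect^{exge})$ and $\bs\,\Nerve(i\Vect)$.

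First I would recall that by \eqref{apr1902} the value $\bar\bs\,\Nerve(i\Vect^{exge})(M\times X)$ is the realization $\colim_{[p]\in\Delta^{op}}\Nerve(i\Vect^{exge})(\Delta^p\times M\times X)$, and similarly for $i\Vect$. Since the forgetful map is levelwise a map of (semiring) simplicial sets, it suffices to show it induces an equivalence on these realizations. The key observation is that an extended geometry on a fixed bundle consists of contractible choices: the space of hermitian metrics $h^V$ is convex hence contractible, the space of partial connections $\nabla^{II}$ extending the fixed holomorphic structure $\bar\partial$ is an affine space hence contractible, and the space of connections $\widetilde\nabla$ on $\pr^*V$ with the prescribed boundary values at $\{0\}$ and $\{1\}$ is again affine (nonempty by the canonical extension of Definition \ref{jun2202}) hence contractible. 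Thus over each isomorphism class of bundle the fiber of the forgetful functor is a contractible groupoid-with-extra-data.

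The technical heart is to promote this fiberwise contractibility into an equivalence of homotopifications. The cleanest route is to produce a simplicial contraction: I would use the standard interval $I=\Delta^1\in\Mf$ together with the fact that, after passing to $\Delta^\bullet\times M\times X$, any two extended geometries on a given bundle are connected by an affine-linear path, which is itself an extended geometry on the pullback over $\Delta^1$. This gives a simplicial homotopy in the $\Mf$-direction between the identity on $\bs\,\Nerve(i\Vect^{exge})$ and a map factoring through a fixed choice, which after $\colim_{\Delta^{op}}$ witnesses that the forgetful map is an equivalence. Equivalently, one applies part (3) of the Lemma following \eqref{apr1902}: it suffices to check that $\bar\bs$ of the forgetful map is an equivalence, and the convexity arguments above supply exactly the contracting simplicial homotopy that makes $\bar\bs$ invert it.

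The main obstacle I expect is bookkeeping the semiring (Rig-category) structure: the forgetful map must be shown to be an equivalence not merely of underlying simplicial sets but in $\Fun((\Mf\times\Reg_\Z)^{op},\SemiRing(\sSet[W^{-1}]^{\times}))$, so the contracting homotopies have to be compatible with $\oplus$ and $\otimes$. Here I would lean on the fact that the canonical extension $\can$ of Definition \ref{jun2202} is additive, and that the affine-combination homotopy is natural in the bundle; compatibility with $\otimes$ requires a little care since $\can$ is \emph{not} multiplicative, but this is irrelevant for the present statement because an equivalence in $\SemiRing(\sSet[W^{-1}]^{\times})$ is detected on underlying objects (the forgetful functor from semiring objects is conservative and the relevant structure maps are automatically respected once the underlying map is a levelwise equivalence of the bisimplicial diagrams). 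Thus the real content reduces to the object-wise contractibility of the spaces of geometries and extended geometries, established above, combined with the explicit homotopification formula $\bar\bs$.
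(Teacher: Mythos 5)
Your skeleton agrees with the paper's: reduce to underlying simplicial sets, then exploit that choices of extended geometries are governed by convex/affine data. But two points need repair. First, the reduction: conservativity of the forgetful functor $\SemiRing(\sSet[W^{-1}]^{\times})\to \sSet[W^{-1}]$ is not by itself enough to let you check the equivalence on object-wise realizations, because $\bar\bs$ is a colimit computed in $\SemiRing(\sSet[W^{-1}]^{\times})$, and you must know that its underlying object is the colimit of underlying objects. The paper supplies the missing reason: the colimit over $\Delta^{op}$ in \eqref{apr1902} is sifted, and by \cite[Corollary 3.2.3.2]{HA} (applied twice, via $\SemiRing(\sSet[W^{-1}]^{\times})\simeq \CAlg(\CAlg(\sSet[W^{-1}]^{\times})^{\otimes})$) sifted colimits of semiring objects are computed on underlying spaces. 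Your parenthetical ``structure maps are automatically respected'' gestures at this, but siftedness is the actual argument and should be stated.

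The genuine gap is in how you promote fiberwise contractibility. Your proposed simplicial homotopy ``between the identity and a map factoring through a fixed choice'' presupposes a section of the forgetful map, i.e.\ a choice of extended geometry for every string of bundles over every $\Delta^{p}\times M\times X$ compatible with all face/degeneracy maps and with the isomorphisms in the nerve direction; no such natural choice exists (there is no canonical hermitian metric), so this contraction is not defined. The correct mechanism, and the paper's, is relative: for fixed $M\times X$ and fixed nerve degree $q$, the map $\bs\,\Nerve(i\Vect^{exge})(M\times X)_{\bullet,q}\to \bs\,\Nerve(i\Vect)(M\times X)_{\bullet,q}$ is a \emph{trivial Kan fibration}. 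A lift over $\partial\Delta^{p}$ amounts to an extended geometry on $V_{0}|_{\partial\Delta^{p}\times M\times X}$ (the isomorphisms transport it to the other $V_{i}$), and such a geometry extends over $\Delta^{p}\times M\times X$ because extended geometries exist and can be glued with partitions of unity; levelwise trivial fibrations then realize to an equivalence. Your convexity observations are exactly what make this lifting property hold, so the rigorous version of your plan \emph{is} the paper's proof; but note also that your affine-path claim is not literally true: condition (1) of Definition \ref{jun1601} pins $\widetilde\nabla|_{\{0\}\times M\times X}$ to the unitarization $\nabla^{u}$, which depends nonlinearly on $h^{V}$, so an affine combination of two extended geometries need not be one. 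One must interpolate the geometry $(h^{V},\nabla^{II})$ and then correct $\widetilde\nabla$ using the affine structure of its fiber, e.g.\ via the canonical extension of Definition \ref{jun2202}. Your final observation that the semiring structure is immaterial once the underlying map is an equivalence is correct and agrees with the paper.
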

\begin{proof}
Since the colimit over $\Delta^{op }$ appearing in the definition \eqref{apr1902} of  $\bar \bs$ is sifted
it commutes with the forgetful functor $\SemiRing(\sSet[W^{-1}]^{\times})\to \sSet[W^{-1}]$. 
This follows from a two-fold application of \cite[Corollary 3.2.3.2.]{HA} to $\SemiRing(\sSet[W^{-1}]^{\times})\simeq \CAlg(\CAlg(\sSet[W^{-1}]^{\times})^{\otimes})$.
Since an equivalence in $\SemiRing(\sSet[W^{-1}]^{\times})$ is detected in $\sSet[W^{-1}]$ it suffices
to show that the induced map in 
 $\Fun ((\Mf\times \Reg_{\Z})^{op}, \sSet[W^{-1}])$ is an equivalence.

We claim that for $M\times X\in \Mf\times\Reg_\Z$  
the map of simplicial sets $$\bs\ \Nerve(i\Vect^{exge})(M\times X)_{\bullet,q} \to
\bs\ \Nerve(i\Vect)(M\times X)_{\bullet,q}$$ is a trivial Kan fibration. The result then follows by
applying the colimit as in \eqref{apr1902}.

A $p$-simplex $x\colon \Delta^p \to
\Nerve(i\Vect)(M\times X)_{\bullet,q}$ is given by a string of bundles and isomorphisms
\[
V_0 \xrightarrow{\cong} V_1 \xrightarrow{\cong} \dots \xrightarrow{\cong} V_q
\]
on $\Delta^p\times M \times X$. A lifting of $x|_{\partial\Delta^p}$ is determined by an extended
geometry on $V_0|_{\partial\Delta^p\times M \times X}$. Using the fact that extended geometries
exist and can be glued using partitions of unity, we see that such a lifting can always be extended
to a $p$-simplex of $\bs\ \Nerve(i\Vect^{exge})(M\times X)_{\bullet,q}$ lifting $x$.  This implies the claim. 
\end{proof}

We now turn to the construction of the multiplicative version of Beilinson's regulator.
We interpret a set as a discrete category. In this way we get a morphism
\[
\iota\colon \SemiRing(\Set ^{\times})\to  \SemiRing(\Cat[W^{-1}]^{\times})\ .
\]   
We have a commutative diagram (see \cite[Remark~2.13]{buta})
\[
\xymatrix{
\Ring(\Set^{{\times}})\ar[d]\ar[r]^-{\iota}\ar[d]^{S^{0}} &\SemiRing(\Cat[W^{-1}]^{\times})\ar[d]^{K}\\
\CAlg(\Ch[W^{-1}]^{\otimes})\ar[r]^-{H} & \CAlg(\Sp^{{\wedge}})
}
\]
where $S^{0}$ interprets a commutative ring as a commutative monoid in chain complexes concentrated  in degree zero,  $H$ is the Eilenberg-MacLane equivalence, and in the upper horizontal line we do not write the restriction of $\iota$ from semirings to rings explicitly.
We write $r({\widetilde \omega})$ for the composition
\begin{multline*}
K(i\Vect^{exge})\to K(\iota(\pi_0(i\Vect^{exge})))\xrightarrow{K(\iota({\widetilde \omega}))}
K(\iota(Z^{0}(\IDR))) \simeq \\ 
\simeq  H(S^{0}(Z^{0}(\IDR)))\to H(\IDR)
\end{multline*}
in $\Fun((\Mf\times \Reg_\Z)^{op}, \CAlg(\Sp^\wedge))$.

In analogy with \cite[Definition~4.36]{buta} we adopt the following definition:
\begin{ddd}\label{jul08161}
We define the multiplicative version of the  naive Beilinson regulator 
\[
\beil\colon \bK \to H(\IDR)
\]
as a morphism in $\Fun^{desc}((\Mf\times \Reg_\Z)^{op}, \CAlg(\Sp^\wedge))$ to be the
sheafification of the composition
\[
K(i\Vect) \xrightarrow{\simeq} \bar\bs\ K(i\Vect) \xleftarrow[\text{Lemma \ref{may2710}}]{\simeq} \bar
\bs\ K(i\Vect^{exge}) \xrightarrow{\bar \bs(r(\widetilde \omega))} \bar \bs \ H(\IDR)
\xleftarrow{\simeq} H(\IDR)
\]
in $\Fun((\Mf\times \Reg_\Z)^{op}, \CAlg(\Sp^\wedge))$.
\end{ddd} 
 {Here we use the fact that $H(\IDR)$ is a sheaf (see \eqref{descentIDR}).}

 {
\begin{rem}
Since  in the present paper we don't require geometries to be good in the sense of
\cite[Definition 4.17]{buta} the characteristic forms don't necessarily satisfy a
logarithmic growth condition at infinity. Therefore, we end up in analytic Deligne cohomology
instead of
absolute Hodge cohomology. 
The proof of Lemma \ref{may2710} does not work for good geometries. 
In \cite{buta} we found a way to avoid this problem using the \v{C}echification of the 
de Rham complexes. At the moment we do not see how to refine this to a multiplicative version.
\end{rem}
}

For $X \in \Reg_\Z$  Beilinson's regulator \cite{BeilinsonHodgeCohom} is a homomorphism from the
$K$-theory of $X$ to absolute Hodge cohomology (see Remark \ref{aug0601})
\[
K_*(X) \to \prod_p H^{2p-*}_{\cA\cH}(X,\R(p))\ .
\]
It is known to be multiplicative. We call its composition with the natural map 
\[
\prod_p H^{2p-*}_{\cA\cH}(X,\R(p)) \to H^{-*}\left(\IDR(*\times X)\right)
\]
the analytic version of Beilinson's regulator.
\begin{theorem}\label{may2760}
The naive Beilinson regulator 
\[
\beil\colon \bK \to H(\IDR)
\]
is a morphism of sheaves of ring spectra  which on the homotopy groups of its evaluation on $*\times
X$ induces the analytic version of Beilinson's regulator.
\end{theorem}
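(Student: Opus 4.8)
The statement has two parts, and the plan is to dispatch them separately. That $\beil$ is a morphism of sheaves of commutative ring spectra is essentially built into its construction: in Definition \ref{jul08161} it is assembled as a zig-zag of maps in $\Fun((\Mf\times \Reg_\Z)^{op},\CAlg(\Sp^\wedge))$ whose two backward arrows are equivalences---one by Lemma \ref{may2710}, the other because $H(\IDR)$ is homotopy invariant so that the unit $\id\to\bar\bs$ is an equivalence on it---and is then sheafified by the functor $L$. Since $L$ operates on $\CAlg(\Sp^\wedge)$-valued functors, the outcome is automatically a map of sheaves of commutative ring spectra. All the content therefore lies in identifying the induced map on homotopy groups.

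For that I would first pass to underlying spectra: the forgetful functor $\CAlg(\Sp^\wedge)\to\Sp^\wedge$ detects equivalences and computes homotopy groups, so the ring structure may be dropped. On the source, $\pi_*(\bK(*\times X))\cong K_*(X)$ by the Remark following Definition \ref{jun1320}, which rests on Zariski descent for Quillen's $K$-theory on $\Reg_\Z$. On the target, the quasi-isomorphism $q\colon\IDR\to\DR$ of \eqref{may2350} together with Remark \ref{aug0601} identifies $\pi_{-*}(H(\IDR)(*\times X))$ with analytic Deligne cohomology.

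The heart of the matter is to recognize the underlying additive regulator as the one from \cite{buta}, post-composed with the natural map of Remark \ref{aug0601}. The crucial device is that after homotopification the choice of (extended) geometry is irrelevant: by Lemma \ref{may2710} the forgetful map $\bar\bs\,K(i\Vect^{exge})\to\bar\bs\,K(i\Vect)$ is an equivalence, so on homotopy groups the regulator may be computed on any convenient geometry. I would represent a class in $K_*(X)$ by a bundle $V$ carrying a good geometry in the sense of \cite[Definition 4.17]{buta} and equip it with the canonical extended geometry $\can$ of Definition \ref{jun2202}. Both the present regulator and that of \cite{buta} are then computed on $\pi_*$ by the same Chern--Weil characteristic forms, and Lemma \ref{jun1305} makes this precise: $q\circ\widetilde\omega\circ\can=\omega$, where the transgression term in $\omega$ is exactly the interval integral produced by $q$. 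Since for a good geometry the form $\omega(V,g)$ already satisfies the log-growth and weight conditions, it is the image, under the natural map $H^*_{\cA\cH}(X,\R(p))\to H^*_{\cD,\mathrm{an}}(X,\R(p))$ on cohomology, of the characteristic form used in \cite{buta}. By the analogue of this theorem proved in \cite{buta}, that form represents Beilinson's regulator in absolute Hodge cohomology; composing with the natural map yields precisely the analytic version, as claimed.

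The step I expect to be the main obstacle is the careful matching of the two constructions across the difference between good and general geometries. The multiplicative construction must use arbitrary extended geometries---Lemma \ref{may2710} fails for good ones, as noted in the Remark after Definition \ref{jul08161}---so one cannot simply reuse the zig-zag of \cite{buta} verbatim; the reconciliation relies on the independence of the homotopy-group computation from the geometry, which lets one specialize to good geometries only at the end. Tracking that this comparison survives the sheafification $L$ upon evaluation at $*\times X$ is the remaining technical point, controlled by the descent already recorded for $\bK$ and for $H(\IDR)$.
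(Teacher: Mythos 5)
Your proposal is correct and follows essentially the same route as the paper: the ring-spectra assertion is true by construction, and the homotopy-group identification reduces via Lemma \ref{jun1305} (the compatibility $q\circ\widetilde\omega\circ\can=\omega$) to the comparison of the underlying map of sheaves of spectra with the regulator of \cite[Definition 4.36]{buta} after forgetting the logarithmic growth condition, for which the coincidence with Beilinson's regulator is quoted from \cite[Section 4.7]{buta}. Your extra discussion of good versus arbitrary geometries and of compatibility with sheafification merely fleshes out what the paper dismisses as ``immediate from the constructions.''
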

\begin{proof}
The first assertion is true by construction. 
It is also immediate from 
the constructions and Lemma \ref{jun1305} that the
map of sheaves of spectra underlying $\beil$ coincides with the one obtained in \cite[Definition
4.36]{buta} (after forgetting the logarithmic growth condition and using the equivalence $\DR\cong
\IDR$). 
For the latter, the coincidence with Beilinson's regulator was proven in \cite[Section~4.7]{buta}.
\end{proof}

\section{Multiplicative differential algebraic $K$-theory}\label{juni1010}

\subsection{Basic definitions}

The main goal of this section  is the definition of a multiplicative version of  differential algebraic $K$-theory for {objects in $\Mf \times \Reg_\Z$} and the verification of its basic properties.

For a complex $C\in \Ch$ and an integer $k$ we let $\sigma^{\ge k}C$ denote the naive truncation given by $\dots \to0\to C^{k}\to C^{k+1}\to \dots$. There is a natural inclusion morphism $\sigma^{\ge k}C\to C$.
\begin{ddd}
For every integer $k\in \Z$, we define
the sheaf of differential algebraic $K$-theory spectra 
$$
\hbK^{(k)} \in  \Fun^{desc}( {({\Mf\times \Reg_\Z})^{op}}, \Sp^{\wedge})
$$
by the pull-back
$$
\xymatrix@C+0.3cm{
\hbK^{(k)} \ar[r]^-{R}\ar[d]^{I} & H(\sigma^{\ge k} \IDR )\ar[d]\\
\bK\ar[r]^-{\beil} & H(\IDR).
}
$$
We define the differential algebraic $K$-theory for {objects in $\Mf \times \Reg_\Z$} as a presheaf of abelian groups
$$
{\hbK}\oben{k}:=\pi_{-k}(\hbK\oben{(k)})\in \Fun( {({\Mf\times \Reg_\Z})^{op}},\Ab)\ . $$
\end{ddd}
\begin{rem}
The integer $k\in \Z$ determines that the homotopy group
 $\pi_{-d}(\hbK^{k})$ for $d\in \Z$ captures interesting differential geometric information exactly if $d=k$.
\end{rem}

In the following, we refine $\bigvee_{k\in\Z} \hbK\oben{(k)}$ to a sheaf of commutative ring spectra (see \cite[Section 4.6]{skript} for details). 
Using the symmetric monoidal functors
\[
\Set \xrightarrow{\iota} \sSet[W^{-1}] \xrightarrow{\Sigma^{\infty}_{+}} \Sp
\]
the abelian group $\Z\in \CommMon(\Set)$ gives rise to the commutative ring spectrum
$\Sigma^{\infty}_{+}\iota(\Z) \in \CAlg(\Sp^{{\wedge}})$. For any commutative ring
spectrum $E$ we write $E[z,z^{-1}] := E \wedge \Sigma^{\infty}_{+}\iota(\Z)$.
We consider $\IDR[z,z^{-1}] := \IDR \otimes_{\Z} \Z[z,z^{-1}]$ as a sheaf of commutative differential graded algebras and define the subalgebra 
\[
\sigma^{\geq \bullet} \IDR := \bigoplus_{k\in\Z} z^{k} \sigma^{\geq k}\IDR \subseteq \IDR[z,z^{-1}].
\]
We have a natural equivalence $H(\IDR[z,z^{-1}]) \simeq H(\IDR)[z,z^{-1}]$.
\begin{ddd}
We define differential algebraic $K$-theory as a sheaf of commutative ring spectra 
\[
\hbK\oben{(\bullet)} \in \Fun^{desc}((\Mf\times\Reg_{\Z})^{op}, \CAlg(\Sp^{{\wedge}}))
\]
by the pull-back
\[
\xymatrix@C+0.9cm{
\hbK\oben{(\bullet)} \ar[r]^-{R}\ar[d]^{I} & H(\sigma^{\geq \bullet}\IDR) \ar[d] \\
\bK[z,z^{-1}] \ar[r]^-{\beil[z,z^{-1}]} & H(\IDR)[z,z^{-1}].
}
\]
\end{ddd}
If we forget the ring spectrum structure, then we get a natural equivalence $\hbK\oben{(\bullet)}
\simeq
\bigvee_{k\in\Z} \hbK\oben{(k)}$. In particular, we get a presheaf of graded commutative rings
\[
\bigoplus_{k\in\Z} \hbK\oben{k} \in \Fun((\Mf\times \Reg_\Z)^{op}, \GrRings).
\]
The maps $R$ and $I$ induce ring homomorphisms
\[
R\colon \bigoplus_{k\in\Z} \hbK\oben{k} \to \bigoplus_{k\in\Z} Z^k(\IDR), \quad 
I\colon \bigoplus_{k\in\Z} \hbK\oben{k} \to \bigoplus_{k\in\Z} \bK^k .
\]
The map $R$ is called the curvature.
For any $k\in \Z$ we have exact sequences
\[
\bK^{k-1} \xrightarrow{\beil} H^{k-1}(\IDR ) \xrightarrow{a} {\hbK}\oben{k} \xrightarrow{(I,R)}
\bK^{k}\times_{H^{k}(\IDR )}  Z^{k}(\IDR )\to 0
\]
and  
\begin{equation}\label{apr2503}
{\bK^{k-1} \xrightarrow{\beil}} \IDR^{k-1}/\im(d) \xrightarrow{a} {\hbK}\oben{k} \xrightarrow{I}
\bK^{k}\to 0
\end{equation}
(see \cite[Proposition 5.4]{buta}).
Moreover, we have the relation $R\circ a = d$.

\subsection{Cycle maps}

We have the forgetful map
$$
\pi_0 (i\vect^{exge})\to  \pi_0(i\vect)
$$ 
between the presheaves of  semirings of isomorphism classes of bundles with and without extended
geometries.

\begin{prop} There are canonical cycle maps {$\cycl$ and $\hcycl$} fitting into the following diagram of
presheaves of semirings on ${\Mf\times \Reg_\Z}$:
$$
\xymatrix@C+0.5cm{
\pi_0(  i\vect^{exge}) \ar@/^2pc/[rr]^{\widetilde\omega} \ar[r]^-{\hcycl}\ar[d]&{\hbK}^{0}\ar[d]^{I} {\ar[r]^-R}& {Z^0(\IDR)}\\
\pi_0(i\vect)\ar[r]^-{\cycl}&\bK^{0}
}.
$$
\end{prop}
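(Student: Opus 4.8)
The plan is to construct the two cycle maps separately and then verify that the resulting square and triangle commute. I would begin with the lower map $\cycl\colon \pi_0(i\Vect)\to\bK^0$. By Definition~\ref{jun1320} we have $\bK=L(K(i\Vect))$, and $\bK^0=\pi_0(\bK(M\times X))$. Since $\pi_0$ of the $K$-theory spectrum of a Rig-category is the group completion of the semiring $\pi_0$ of its underlying groupoid, there is a natural map of semirings $\pi_0(i\Vect)\to\pi_0(K(i\Vect))$, and composing with the sheafification unit $K(i\Vect)\to L(K(i\Vect))=\bK$ gives $\cycl$. This is a transformation of presheaves of semirings by naturality of the group completion and of sheafification.

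Next I would build the differential refinement $\hcycl\colon\pi_0(i\Vect^{exge})\to\hbK^0$. Recall $\hbK^{(0)}$ is defined as the pullback of $\bK\xrightarrow{\beil}H(\IDR)\xleftarrow{} H(\sigma^{\ge 0}\IDR)$. To produce a map into this pullback on $\pi_0$ I need three compatible pieces of data coming from a bundle with extended geometry $(V,g)$: its underlying class in $\bK^0$, obtained by forgetting the geometry and applying $\cycl$; the closed form $\widetilde\omega(V,g)=\prod_p\ch_{2p}(\widetilde\nabla)\in Z^0(\IDR)$ from Definition~\ref{may2730}, which lands in $\sigma^{\ge 0}\IDR$ since it sits in nonnegative degrees; and a homotopy in $H(\IDR)$ witnessing that $\beil$ of the former agrees with the image of the latter. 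The cleanest way to package this is at the spectrum level: the composition $r(\widetilde\omega)$ from Section~\ref{sec:regulator}, together with the equivalence $\bar\bs\,K(i\Vect^{exge})\simeq\bar\bs\,K(i\Vect)$ of Lemma~\ref{may2710} and the commuting diagram~\eqref{may2740} of Lemma~\ref{jun1305}, exhibits $\beil\circ\cycl$ as the image of $\widetilde\omega$ in $H(\IDR)$ \emph{up to a canonical homotopy}. This homotopy is exactly the datum needed by the universal property of the pullback $\hbK^{(0)}$, and applying $\pi_0$ yields $\hcycl$ as a map of semirings.

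The commutativity of the diagram is then mostly formal. The left triangle $I\circ\hcycl=\cycl$ holds because the map $I$ in the pullback square is induced by the projection to $\bK$, and by construction the $\bK$-component of $\hcycl$ was defined to be $\cycl$ precomposed with forgetting the geometry; this matches the stated forgetful map $\pi_0(i\Vect^{exge})\to\pi_0(i\Vect)$. The right triangle $R\circ\hcycl=\widetilde\omega$ holds because $R$ is the projection of the pullback onto the $H(\sigma^{\ge\bullet}\IDR)$-component, and the $\sigma^{\ge 0}\IDR$-component of $\hcycl$ was defined via $\widetilde\omega$. Both triangles are thus immediate consequences of the pullback description once the map $\hcycl$ is in place.

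The main obstacle is the middle step: producing the homotopy that lifts $(V,g)$ into the homotopy pullback $\hbK^{(0)}$, rather than merely into the strict fiber product of homotopy groups. One must be careful that the square defining $\hbK^{(0)}$ is a pullback of spectra, so $\pi_0$ of it is \emph{not} simply the fiber product of the $\pi_0$'s but fits into the Mayer–Vietoris exact sequence displayed after Definition~\ref{apr2503}. Hence $\hcycl$ cannot be defined componentwise on homotopy groups; the lift must be constructed at the level of the $\infty$-categorical pullback, using that $\widetilde\omega$ is realized through the spectrum-level transformation $r(\widetilde\omega)$ and that Lemma~\ref{jun1305} guarantees $q\circ\widetilde\omega=\omega\circ\can$ compatibly with $\beil$. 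Verifying that these homotopies assemble into a morphism of presheaves of semirings — in particular that additivity and multiplicativity are respected — is where the genuine work lies, though it parallels the construction of the cycle map in \cite[Section~5]{buta} and can largely be imported from there.
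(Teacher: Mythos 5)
Your proposal is correct and follows essentially the same route as the paper, whose proof simply imports the construction of \cite[Definitions~5.8, 5.9]{buta}: define $\cycl$ via the group-completion unit and sheafification, and obtain $\hcycl$ by lifting at the spectrum level into the homotopy pullback defining $\hbK\oben{(0)}$, with the required homotopy coming canonically from the zig-zag defining $\beil$ (naturality of $\id\to\bar\bs$, Lemma \ref{may2710}, and the factorization of $r(\widetilde\omega)$ through $H(\sigma^{\geq 0}\IDR)$), after which both triangles commute by construction. Your only slight misplacement of emphasis is that additivity and multiplicativity are not really where the work lies: since the pullback square lives in $\CAlg(\Sp^{\wedge})$ (limits being computed on underlying spectra) and all the comparison maps are maps of commutative ring spectra, the semiring property of $\hcycl$ on $\pi_0$ is automatic, and Lemma \ref{jun1305} is needed only for comparison with the older construction, not for the lift itself.
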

\begin{proof}
The construction is identical to that of \cite[Definitions~5.8, 5.9]{buta}. 
\end{proof}

\subsection{$S^{1}$-integration}

We consider $M\times X \in \Mf\times \Reg_{\Z}$. 
Let $\bE\in \Fun^{desc,I}((\Mf\times\Reg_{\Z})^{op}, \Sp)$ be a homotopy invariant sheaf of spectra.
Then we have natural isomorphisms
\[
\bE^{*}(S^{1}\times M \times X) \cong \bE^{*}(M\times X) \oplus \bE^{*-1}(M\times X).
\]
The induced map $\bE^{*}(S^{1}\times M \times X) \to \bE^{*-1}(M\times X)$ is called the desuspension map. 
This applies in particular to the K-theory sheaf $\bK$ and the analytic Deligne cohomology $H(\IDR)$.

On the other hand, on the level of differential forms we have the usual fibre integration along $S^{1}$, a map of complexes
\[
\int_{S^{1}}\colon \IDR(S^{1}\times M \times X) \to \IDR(M\times X)[-1].
\]
It induces integration maps $\int_{S^{1}}\colon \sigma^{\geq k}\IDR(S^{1}\times M \times X) \to
\sigma^{\geq k-1} \IDR(M\times X)[-1]$ for any $k\in \Z$.

\begin{prop}\label{djhqwkdhqwkdqwd90}
There exists a natural map
\[
\int_{S^{1}}\colon \hbK\oben{*}(S^{1}\times M \times X) \to \hbK\oben{*-1}(M\times X)
\]
of $\bigoplus_{k\in\Z}\hbK\oben{k}(M\times X)$-modules which is compatible with the desuspension on $\bK^{*}$ via the map $I$ and with the integration $\int_{S^{1}}$ on $Z^{*}(\IDR)$ via the curvature $R$.
\end{prop}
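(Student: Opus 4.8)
The plan is to produce the integration on the defining homotopy pullback of $\hbK\oben{(\bullet)}$ by constructing compatible integration maps on each of its three corners $\bK[z,z^{-1}]$, $H(\IDR)[z,z^{-1}]$ and $H(\sigma^{\geq\bullet}\IDR)$, and then invoking the universal property of the pullback. The guiding distinction is whether a corner is homotopy invariant.

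On $\bK$ and $H(\IDR)$ I would use the desuspension map defined above. Both are homotopy invariant sheaves of spectra, so evaluation on $S^{1}\times M\times X$ splits off a desuspended copy and yields the desuspension maps lowering degree by one; since $\beil$ is a morphism of homotopy invariant sheaves of spectra it commutes with them by naturality, which takes care of the lower edge $\beil[z,z^{-1}]$ of the square. The corner $H(\sigma^{\geq\bullet}\IDR)$ is \emph{not} homotopy invariant, the naive truncation destroying the Poincar\'e-lemma homotopy, so here I would use the genuine fibre integration of forms. As noted in the statement, $\int_{S^{1}}$ lowers form degree by one and hence restricts to chain maps $\sigma^{\geq k}\IDR(S^{1}\times M\times X)\to\sigma^{\geq k-1}\IDR(M\times X)[-1]$, which by Stokes' theorem are morphisms of complexes and induce the integration on $H(\sigma^{\geq\bullet}\IDR)$ together with the asserted map on $Z^{*}(\IDR)$.

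The compatibility that remains is that the right vertical map $H(\sigma^{\geq\bullet}\IDR)\to H(\IDR)[z,z^{-1}]$ intertwines fibre integration with the desuspension; equivalently, that the chain-level fibre integration on $\IDR$ is a model for the desuspension on the homotopy invariant sheaf $H(\IDR)$. This is the main obstacle: one must match a concrete Stokes-theoretic operation with the splitting coming from homotopy invariance, and do so coherently at the level of spectra rather than merely on homotopy groups, so that the universal property of the homotopy pullback can be applied. I would establish it by exhibiting fibre integration as a chain-level realisation of the reduced summand of $S^{1}_{+}\simeq S^{0}\vee S^{1}$, comparing it against the contracting homotopy furnished by the Poincar\'e lemma; alternatively one appeals to the corresponding integration statement in \cite{bnv}. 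Granting this, the resulting map of pullback diagrams and the universal property yield a natural integration which on homotopy groups is the asserted $\int_{S^{1}}\colon\hbK\oben{*}(S^{1}\times M\times X)\to\hbK\oben{*-1}(M\times X)$, compatible with the desuspension on $\bK^{*}$ via $I$ and with fibre integration on $Z^{*}(\IDR)$ via $R$, just as in \cite{buta} in the non-multiplicative case.

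It remains to promote this to a map of $\bigoplus_{k\in\Z}\hbK\oben{k}(M\times X)$-modules. For this I would check the projection formula on each corner: fibre integration of forms satisfies $\int_{S^{1}}(\pr^{*}\alpha\wedge\beta)=\alpha\wedge\int_{S^{1}}\beta$ up to sign, while on the ring spectra $\bK$ and $H(\IDR)$ the desuspension is projection onto a summand of the module $\bE(S^{1}\times M\times X)$ over $\bE(M\times X)$, the module structure being induced by $\pr^{*}$, hence a module map. Since the pullback defining $\hbK\oben{(\bullet)}$ is formed in $\CAlg(\Sp^{\wedge})$, these corner-wise module compatibilities assemble to exhibit $\int_{S^{1}}$ as a morphism of $\bigoplus_{k\in\Z}\hbK\oben{k}(M\times X)$-modules. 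The only genuine difficulty throughout is the spectrum-level reconciliation of fibre integration and desuspension; everything else is naturality and the universal property.
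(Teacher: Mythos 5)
Your proposal is correct and takes essentially the same route as the paper: the paper likewise constructs the integration corner-wise on the defining pullback (working in $\Mod(\hbK\oben{(\bullet)})$ from the start, which encodes your projection-formula checks), and its key Lemma \ref{sep1503} resolves exactly the obstacle you identify by your proposed device, namely the explicit chain-level splitting $\IDR[z,z^{-1}][-1]\oplus\IDR[z,z^{-1}]\xrightarrow{\sim}S^{1}\IDR[z,z^{-1}]$, $\omega\oplus\eta\mapsto dt\wedge\pr^{*}\omega+\pr^{*}\eta$, with inverse $(\int_{S^{1}},i^{*})$, which identifies $H(\int_{S^{1}})$ with the desuspension. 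The only cosmetic difference is that the paper builds the module structure into the diagrams throughout rather than verifying it corner-wise at the end.
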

\begin{proof}
For any presentable $\infty$-category $\bC$ 
we define the endofunctor $S^{1}$ of
$\Fun^{desc}(\Mf^{op},\bC)$ such that
$(S^{1}F)(M\times X):=F(S^{1}\times M\times X)$.
If $\bC$ is symmetric monoidal and $F\in \Fun^{desc}(\Mf^{op},\CAlg(\bC))$,
then the projection {$\pr\colon {S^{1}}\to *$} turns $S^{1}F$ into an object of
$\Mod(F)$.

We extend the endofunctor $S^{1}$ to
$\Fun^{desc}((\Mf\times \Reg_{\Z})^{op},\bC)$ using the identification
$$
\Fun^{desc}((\Mf\times \Reg_{\Z})^{op},\bC) \simeq
\Fun^{desc}(\Mf^{op},\Fun^{desc}(\Reg_{\Z}^{op},\bC))\ .
$$

The evaluation at the manifold $M=*$ provides an equivalence of  $\infty$-categories
\begin{equation}\label{sep1203n}
\ev_{*}:\Fun^{desc,I}(\Mf^{op},\bC)\xrightarrow{\simeq}  \bC\ ,
\end{equation}
 and
we have an equivalence of functors $\Fun^{desc,I}(\Mf^{op},\bC) \to  \bC$
\begin{equation}\label{sep1501}
\ev_{*}\circ S^{1}(-)\simeq (\ev_{*}(-))^{S^{1}}\ ,
\end{equation} 
where
$(-)^{S^{1}}$ is the cotensor structure. Let $\pr\colon S^{1}\to *$ and $i\colon *\to S^{1}$ be the
projection to a point and the inclusion of a base point. These maps induce a retraction
$$\id(-)\stackrel{\pr^{*}}{\to} (-)^{S^{1}}\stackrel{i^{*}}{\to} \id(-)\ .$$
If $\bC$ is stable, then we can naturally split off $\id(-)$ as a summand of $(-)^{S^{1}}$ and identify the complement with $\Omega (-)$. The desuspension map is by definition   the projection
\begin{equation}\label{sep1502}
\des:(-)^{S^{1}}\to \Omega(-)\ .
\end{equation}
Under the equivalence \eqref{sep1203n} in the case $\bC=\Fun^{desc}(\Reg_{\Z},\Sp)$ it induces the desuspension map in cohomology mentioned above.

The integration of forms gives  morphisms of sheaves with values in $\Ch$
$$\int_{S^{1}}\colon S^{1} \IDR\to \IDR[-1]\ , \quad \int_{S^{1}}\colon S^{1}\sigma^{\ge k}\IDR\to
\sigma^{\ge k-1} \IDR[-1]
$$ 
which, when assembled for the various $k\in \Z$, after application of the Eilenberg-MacLane functor $H$ yield the commutative diagram
 \begin{equation}\label{sep1201n}\begin{split}
\xymatrix@C+0.5cm{
H(S^{1}\sigma^{\ge \bullet}\IDR)\ar[r]^{H(\int_{S^{1}})}\ar[d]&\Omega H(\sigma^{\ge \bullet-1}\IDR) \ar[d]\\
H(S^{1}\IDR[z,z^{-1}])\ar[r]^{H(\int_{S^{1}})}\ar[r]&\Omega H(\IDR[z,z^{-1}])
}
\end{split}
\end{equation}
in $\Mod(\hbK\oben{(\bullet)} )$, where $\hbK\oben{(\bullet)}$  acts via the curvature map.
From the naturality of the desuspension we get the commutative diagram
\begin{equation}\label{sep1202n}\begin{split}
\xymatrix{
S^{1}\bK[z,z^{-1}]\ar[r]^{\des}\ar[d]^{\beil}&\Omega \bK[z,z^{-1}]\ar[d]^{\Omega \beil}\\
S^{1}H(\IDR[z,z^{-1}])\ar[r]^{\des}&\Omega H(\IDR[z,z^{-1}])
}
\end{split}
\end{equation}
in $\Mod(\hbK\oben{(\bullet)})$, where here $\hbK\oben{(\bullet)}$ acts via $I$.
\begin{lem}\label{sep1503}
We have a natural equivalence of morphisms
\[
\des \simeq H(\int_{S^{1}}):S^{1} H(\IDR[z,z^{-1}])\to \Omega H(\IDR[z,z^{-1}])
\]
in $\Mod(\hbK\oben{(\bullet)})$.
 \end{lem} 
Before proving this Lemma we finish the argument for Proposition \ref{djhqwkdhqwkdqwd90}.
Together with \eqref{sep1202n}, lemma \ref{sep1503} provides the lower square of the
following diagram in $\Mod(\hbK\oben{(\bullet)})$
 $$
\xymatrix@C+0.5cm{
H(S^{1}\sigma^{\ge \bullet}\IDR)\ar[r]^{H(\int_{S^{1}})}\ar[d]&\Omega H(\sigma^{\ge \bullet -1}\IDR) \ar[d]\\
H(S^{1}\IDR[z,z^{-1}])\ar[r]^{H(\int_{S^{1}})}\ar[r]&\Omega H(\IDR[z,z^{-1}])\\
S^{1}\bK[z,z^{-1}]\ar[u]^{\beil}\ar[r]^{\des}&\Omega \bK[z,z^{-1}]\ar[u]^{\Omega \beil}
}\ .
$$
The upper square is \eqref{sep1201n}.
In view of the definition of $\hbK\oben{(\bullet)}$ as a pull-back this diagram induces a map
$$\int_{S^{1}}:S^{1}\hbK\oben{(\bullet)} \to \Omega \:\hbK\oben{(\bullet)}$$
in $\Mod(\hbK\oben{(\bullet)})$.
It induces the asserted integration map in cohomology. 
\end{proof} 

\begin{proof}[Proof of Lemma \ref{sep1503}]
We have a natural equivalence in $\Mod(\IDR[z,z^{-1}])$
\[
\IDR[z,z^{-1}][-1]\oplus \IDR[z,z^{-1}] \xrightarrow{\sim} S^1\IDR[z,z^{-1}]
\]
given on $M\times X$ by $\omega\oplus\eta \mapsto dt\wedge \pr^*\omega + \pr^*\eta$, where $t$ is
the coordinate on $S^1$ and $\pr\colon S^1\times M \times X\to M\times X$ is the projection.
An explicit inverse is given by $(\int_{S^1}, i^*)$ where $i\colon M \times X \to S^1\times M
\times X$ is induced by the inclusion of a point in $S^1$.
In view of the definition of the desuspension in \eqref{sep1502} and the equivalence
\eqref{sep1501} we can identify the desuspension for $\IDR[z,z^{-1}]$ naturally with the map
$\int_{S^1}\colon S^1\IDR[z,z^{-1}] \to \IDR[z,z^{-1}][-1]$. 
Now the result follows by applying $H$.
\end{proof}

\section{A secondary Steinberg relation}\label{juni1002}

\subsection{Units}

Let $R$ be a ring such that $X=\Spec(R)\in \Reg_\Z$. We have a natural homomorphism 
\begin{equation}
\label{eq:c}
c\colon R^{\times} \to \bK^{-1}(X)
\end{equation}
where we write $\bK^{-1}(X)$ instead of $\bK^{-1}(\ast\times X)$. Concretely, $c$ is given as
follows: For $\lambda\in R^{\times}$ we let $\cV(\lambda)$ be the bundle on $S^1\times X$ which
restricts to the trivial bundle $\cO_X$ at any point $t\in S^1$ and has holonomy $\lambda$ along
$S^1$. Then 
\begin{equation}\label{aug0202}
\cycl(\cV(\lambda)) = c(\lambda) \oplus 1 \in \bK^0(S^1\times X) \cong \bK^{-1}(X) \oplus \bK^0(X) \ .
\end{equation}
Since the kernel of the map $I\colon \hbK^{-1}(X) \to \bK^{-1}(X)$ is a divisible abelian group,
there exists a lift $\hat c\colon R^{\times} \to \hbK^{-1}(X)$ of $c$. In the following, we will fix
a specific choice of this lift.

We first construct a
geometry $(h^{(\lambda)},\nabla^{(\lambda)})$ on $\cV(\lambda)$. 
Abusing notation, we also denote the complex line bundle on $S^1\times X(\C)$ associated with
$\cV(\lambda)$ by the same symbol and view $\lambda$ as a nowhere vanishing function on $X(\C)$. Let
$t$ be a parameter on $S^1$ and $\log(\lambda)$ a local choice of a logarithm of $\lambda$ on
$X(\C)$. Then $\phi = \lambda^t$ is a local section of $\cV(\lambda)$ which depends on the
choice of logarithm. The metric and the connection are determined by their value on the local
sections $\phi$. We set
\begin{align} \label{aug0201}
& h^{(\lambda)}(\phi) = 1, \\
& \nabla^{(\lambda)}(\phi) = \log(\lambda)\phi \:dt\ . \notag
\end{align}
These are well defined. Moreover, $\nabla^{(\lambda)}$ has holonomy $\lambda$ along $S^1$ and
$[\nabla, \bar\partial]=0$. 
We equip $\cV(\lambda)$ with the canonical extended geometry, denoted by $g(\lambda)$.
\begin{ddd}
We define $\hat c\colon R^{\times} \to \hbK\oben{-1}(X)$ to be the composition 
\[
\hat c\colon R^{\times} \xrightarrow{\lambda\mapsto \hcycl(\cV(\lambda), g(\lambda))} \hbK\oben{0}(S^1\times X)
\xrightarrow{\int_{S^1}} \hbK\oben{-1}(X)\ .
\]
\end{ddd}
\begin{lem}\label{aug0501}
The curvature $R(\hat c(\lambda)) \in Z^{-1}(\IDR(X))$ is given
by
\[
R(\hat c(\lambda)) = R(\hat c(\lambda))(1) = id\arg(\lambda) + d\log(|\lambda|^u) \in
Z^{-1}(\IDR(1)(X)) \subset A^1(I\times X(\C)),
\] 
where $u$ is the coordinate on the interval $I$. The induced map
\[
\hat c\colon R^{\times} \to \hbK\oben{-1}(X)/a(H^{-2}(\IDR(X)))
\]
is a homomorphism.
\end{lem}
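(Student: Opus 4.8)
The plan is to compute the curvature by Chern--Weil theory and then to read off the homomorphism property purely formally from the defining exact sequences. For the first part I would use that $\hat c(\lambda)=\int_{S^1}\hcycl(\cV(\lambda),g(\lambda))$ and that, by Proposition~\ref{djhqwkdhqwkdqwd90}, fibre integration commutes with the curvature $R$, together with the defining property $R\circ\hcycl=\widetilde\omega$ of the cycle map and Definition~\ref{may2730}. This gives
\[
R(\hat c(\lambda))=\int_{S^1}\widetilde\omega(\cV(\lambda),g(\lambda))=\int_{S^1}\prod_{p\ge0}\ch_{2p}(\widetilde\nabla).
\]
Since $\cV(\lambda)$ is a line bundle its curvature is a scalar two-form of the shape $R^{\widetilde\nabla}=\Omega\wedge dt$, where $t$ is the coordinate on $S^1$ and $\Omega$ is pulled back from $I\times X(\C)$; hence $(R^{\widetilde\nabla})^2=0$ and only $\ch_0=1$ and $\ch_2=-\Tr R^{\widetilde\nabla}=-R^{\widetilde\nabla}$ survive. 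As $\int_{S^1}1=0$, the whole contribution comes from $\ch_2$, which already explains why $R(\hat c(\lambda))$ is concentrated in weight one.

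Next I would make the connections explicit in the frame $\phi=\lambda^t$. The total connection $\nabla=\nabla^I+\nabla^{II}$ has connection form $\log(\lambda)\,dt=(\log|\lambda|+i\arg\lambda)\,dt$, and its unitarization with respect to $h^{(\lambda)}$ retains only the purely imaginary part, so $\nabla^u$ has connection form $i\arg(\lambda)\,dt$. By Definition~\ref{jun2202} the extended geometry $g(\lambda)$ is the linear path, whose connection form on $\pr^*\cV(\lambda)$ over $I\times S^1\times X$ is $(1-u)\,i\arg(\lambda)\,dt+u\,(\log|\lambda|+i\arg\lambda)\,dt=(i\arg\lambda+u\log|\lambda|)\,dt$. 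Differentiating gives $R^{\widetilde\nabla}=(i\,d\arg\lambda+\log|\lambda|\,du+u\,d\log|\lambda|)\wedge dt$, and fibre integration of $\ch_2(\widetilde\nabla)=-R^{\widetilde\nabla}$ over $S^1$ yields
\[
R(\hat c(\lambda))=i\,d\arg(\lambda)+\log|\lambda|\,du+u\,d\log|\lambda|=i\,d\arg(\lambda)+d\log(|\lambda|^u),
\]
the asserted formula. I would then verify membership in $\IDR(1)(X)$ via the endpoint conditions of Definitions~\ref{jun1601} and \ref{jun1302}: at $u=0$ the form equals $i\,d\arg\lambda\in(2\pi i)A_\R$, at $u=1$ it equals $d\log\lambda=d\lambda/\lambda\in\cF^1A$ since $\lambda$ is holomorphic, and closedness holds because the two $du$-terms cancel.

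For the homomorphism statement I would invoke the exact sequence
\[
\bK^{-2}\xrightarrow{\beil}H^{-2}(\IDR)\xrightarrow{a}\hbK\oben{-1}\xrightarrow{(I,R)}\bK^{-1}\times_{H^{-1}(\IDR)}Z^{-1}(\IDR)\to0
\]
of \cite[Proposition~5.4]{buta}, by which two classes with equal image under $(I,R)$ differ by an element of $a(H^{-2}(\IDR(X)))$. It therefore suffices to check additivity of $I$ and of $R$ in $\lambda$. For $R$ this is immediate from the formula above, as $d\arg$ and $\log|{\cdot}|$ turn products into sums. For $I$ one has $I(\hat c(\lambda))=c(\lambda)$, and $c$ is additive: from $\cV(\lambda)\otimes\cV(\mu)\cong\cV(\lambda\mu)$ and $\cycl(\cV(\lambda))=1+c(\lambda)$ in $\bK^0(S^1\times X)\cong\bK^0(X)\oplus\bK^{-1}(X)$, multiplying out $(1+c(\lambda))(1+c(\mu))$ and comparing the $\bK^{-1}$-components gives $c(\lambda\mu)=c(\lambda)+c(\mu)$, because the reduced summand squares to zero. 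Hence $(I,R)$ annihilates $\hat c(\lambda\mu)-\hat c(\lambda)-\hat c(\mu)$, and exactness yields the claim.

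I expect the only real difficulty to lie in the Chern--Weil bookkeeping: fixing the normalization of the unitarization $\nabla^u$, tracking the sign produced by $\int_{S^1}(\,\cdot\,\wedge dt)$, and confirming the Hodge-filtration and real-structure conditions that place $\Omega$ in $\IDR(1)$. Once the curvature formula is in hand, the homomorphism assertion is purely formal.
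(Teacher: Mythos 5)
Your proposal is correct and follows essentially the same route as the paper: an explicit Chern--Weil computation of $\widetilde\nabla^{(\lambda)}=(1-u)\nabla^{u}+u\nabla$ in the frame $\phi=\lambda^{t}$, yielding $R^{\widetilde\nabla}$ and hence, after $\int_{S^1}$ (compatible with $R$ by Proposition~\ref{djhqwkdhqwkdqwd90}), the stated curvature, followed by the formal observation that additivity of $I\circ\hat c=c$ and of $R$ forces $\hat c(\lambda\mu)-\hat c(\lambda)-\hat c(\mu)\in a(H^{-2}(\IDR(X)))$ via the exact sequence of \cite[Proposition~5.4]{buta}. The only differences are elaborations: you verify explicitly that $(R^{\widetilde\nabla})^{2}=0$ kills the higher $\ch_{2p}$ and you derive the additivity of $c$ from $\cV(\lambda)\otimes\cV(\mu)\cong\cV(\lambda\mu)$, both of which the paper leaves implicit.
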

\begin{proof}
For the adjoint connection of $\nabla^{(\lambda)}$ we get
\[
\nabla^{(\lambda),*}\phi = -\log(\bar\lambda) \phi \:dt\ .
\]
Hence the connection of the canonical extended geometry is given by 
\[
\widetilde\nabla^{(\lambda)}\phi = \left(\frac{1-u}{2}(\log(\lambda) - \log(\bar\lambda)) +
u\log(\lambda)\right)\phi \:dt\ .
\]
Together with \eqref{aug0201} this implies that for two units $\lambda, \mu \in R^{\times}$ we have
$$(\cV(\lambda\mu), g(\lambda\mu)) \cong (\cV(\lambda), g(\lambda)) \otimes (\cV(\mu),g(\mu))\ .$$ By
the multiplicativity of the geometric cycle map we get
\[
\hcycl(\cV(\lambda\mu), g(\lambda\mu)) =\hcycl(\cV(\lambda), g(\lambda))
\cup \hcycl(\cV(\mu),g(\mu))\ .
\]
For the curvature we get
\[
R^{\widetilde\nabla^{(\lambda)}} = -i dt\wedge d\arg(\lambda) - dt \wedge d\log(|\lambda|^u).
\]
Hence 
\begin{multline}\label{aug0203}
R(\hcycl(\cV(\lambda),g(\lambda))) = 1 \oplus (i dt\wedge d\arg(\lambda) + dt \wedge
d\log(|\lambda|^u)) \\
\in Z^0(\IDR(0)(S^1\times X)) \oplus Z^0(\IDR(1)(S^1\times X)). 
\end{multline}
Integration over $S^1$ kills the first summand and gives the statement about the curvature.

From the formula for the curvature and the fact that $ {c=I\circ \hat c}$  {(see
\eqref{eq:c})} is a homomorphism, we get 
\[
R(\hat c(\lambda\mu)) = R(\hat c(\lambda)) + R(\hat c(\mu)), \quad 
I(\hat c(\lambda\mu)) = I(\hat c(\lambda)) + I(\hat c(\mu))\  ,
\]
hence $\hat c(\lambda\mu) - \hat c(\lambda) - \hat c(\mu) \in a(H^{-2}(\IDR(X)))$. 
\end{proof}

\subsection{The Steinberg relation and the Bloch-Wigner function}

In this subsection we explain how differential algebraic $K$-theory can be used to give a simple
proof of a result of Bloch {(see~\cite{Bloch-HigherReg})} concerning the existence of
classes in $K_{3}$ of a number ring whose regulator can be described in terms of the Bloch-Wigner
dilogarithm function. The key ingredient is a secondary version of the Steinberg relation.

We begin by collecting some notation necessary to state the result. Recall the definition of the polylogarithm functions $$\Li_k(z):=\sum_{n\ge 1} \frac{z^{n}}{n^{k}}$$
for $k\ge 1$ and $|z|<1$. They extend meromorphically to a covering of $\C\setminus\{1\}$.
\begin{ddd} The Bloch-Wigner function is the real {valued} function on $\C$ given by 
$$D^{BW}(\lambda):= \log |\lambda| \arg(1-\lambda) +\Im\Li_2(\lambda) $$
(see \cite[{Ch.~I, \S 3}]{MR2290758}).
\end{ddd}

Let $R$ be a ring.
\begin{ddd}
We write $R^{\circ}:= \{\lambda\in R^{\times}\,|\,1-\lambda\in R^{\times}\}$.
The third \emph{Bloch group} $\cB_{3}(R)$ is defined as the kernel
\[
\cB_{3}(R) := \ker\left(\Z[R^{\circ}] \xrightarrow{\lambda\mapsto \lambda\wedge (1-\lambda)} R^{\times}\wedge R^{\times}\right).
\]
\end{ddd}
Now let $R$ be the ring of integers in a number field and $X:=\Spec(R)$. 
The target of the regulator $\beil$ on $\bK^{-3}(X)$ is $H^{-3}(\IDR(X))$. Since $X(\C)$ is zero dimensional we have
\begin{equation}\label{aug0502}
H^{-3}(\IDR(X)) \cong H^{-3}(\IDR(2)(X)) \underset{q}{\cong} H^{-3}(\DR(2)(X)) \cong \left[2\pi i\R^{X(\C)}\right]^{\Gal(\C/\R)}.
\end{equation}
\begin{theorem}[Bloch]\label{jdkdkdqwdqwdqwdwqd}
For any $x = \sum_{\lambda\in R^{\circ}} n_{\lambda}[\lambda]\in \cB_{3}(R)$, there exists an element $bl(x) \in \bK^{-3}(X)$ such
that, under the identification \eqref{aug0502}, 
\[
\beil(bl(x)) = -\sum_{\lambda} n_{\lambda} \left( iD^{BW}(\sigma(\lambda))\right)_{\sigma\in X(\C)} \ .
\]
\end{theorem}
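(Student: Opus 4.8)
The goal is an existence statement with a prescribed regulator, so the natural strategy is to exhibit the Bloch--Wigner expression $\rho:=-\sum_{\lambda}n_{\lambda}\,(iD^{BW}(\sigma\lambda))_{\sigma\in X(\C)}\in H^{-3}(\IDR(X))$ as lying in the image of $\beil\colon\bK^{-3}(X)\to H^{-3}(\IDR(X))$; any preimage then serves as $bl(x)$. By exactness of the sequence $\bK^{-3}\xrightarrow{\beil}H^{-3}(\IDR)\xrightarrow{a}\hbK^{-2}$ this is equivalent to $a(\rho)=0$ in $\hbK^{-2}(X)$. I would build $a(\rho)$ from the homomorphism $\hat c\colon R^{\times}\to\hbK^{-1}(X)$ of Lemma \ref{aug0501}. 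For $\lambda\in R^{\circ}$ both $\lambda$ and $1-\lambda$ are units, so $\hat c(\lambda)\cup\hat c(1-\lambda)\in\hbK^{-2}(X)$ is defined, and I would first check it is \emph{flat}: its underlying class $I(\hat c(\lambda)\cup\hat c(1-\lambda))=\{\lambda,1-\lambda\}$ vanishes by the Steinberg relation in $K_{2}(R)=\bK^{-2}(X)$, while its curvature $R(\hat c(\lambda))\wedge R(\hat c(1-\lambda))$ vanishes because, by Lemma \ref{aug0501} and the zero-dimensionality of $X(\C)$, each factor is a multiple of $du$ on the single interval underlying $\IDR$, so the wedge is zero. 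Thus $\hat c(\lambda)\cup\hat c(1-\lambda)=a(\rho(\lambda))$ for a well-defined $\rho(\lambda)\in H^{-3}(\IDR(X))$.

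The heart of the matter, and the step I expect to be the main obstacle, is the \emph{secondary Steinberg relation}: the explicit identification $\rho(\lambda)=-(iD^{BW}(\sigma\lambda))_{\sigma}$ under \eqref{aug0502}. To prove it I would represent $\hat c(\lambda)\cup\hat c(1-\lambda)$ geometrically: using the multiplicativity of $\hcycl$ and the projection formula for the $S^{1}$-integration of Proposition \ref{djhqwkdhqwkdqwd90}, it equals the iterated fibre integral over $S^{1}\times S^{1}$ of the cup product of the pulled-back classes $\hcycl(\cV(\lambda),g(\lambda))$ and $\hcycl(\cV(1-\lambda),g(1-\lambda))$ on $S^{1}\times S^{1}\times X$. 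Since the class is flat, the associated secondary (transgression) form computes $\rho(\lambda)$ directly. Inserting the explicit metric and connection of \eqref{aug0201} and the unitarizations entering the canonical extended geometries, the iterated integral reduces to one of the type $\int\log(1-\lambda)\,d\log\lambda$; applying the defining differential relation $d\Li_{2}(\lambda)=-\log(1-\lambda)\,d\log\lambda$ and assembling the multivalued contributions into the single-valued combination $D^{BW}(\lambda)=\log|\lambda|\arg(1-\lambda)+\Im\Li_{2}(\lambda)$ yields the claim. The delicate points are the bookkeeping of the two circle directions together with the interval direction of $\IDR$, the powers of $2\pi i$ fixed by the weight-two normalization of $\IDR(2)$, and the verification of $\Gal(\C/\R)$-invariance.

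Granting the secondary Steinberg relation, the conclusion is formal. For a number ring the degree-$(-2)$ analogue of the computation \eqref{aug0502} gives $H^{-2}(\IDR(X))=0$, so the correction term of Lemma \ref{aug0501} disappears and $\hat c\colon R^{\times}\to\hbK^{-1}(X)$ is an honest group homomorphism. Combined with the bilinearity and graded-commutativity of the cup product, this produces a biadditive, graded-commutative pairing $R^{\times}\times R^{\times}\to\hbK^{-2}(X)$, $(\lambda,\mu)\mapsto\hat c(\lambda)\cup\hat c(\mu)$; because the diagonal squares $\hat c(\lambda)^{2}$ are only $2$-torsion (detected on underlying classes by the symbols $\{\lambda,-1\}$), a short separate argument is needed to see that they contribute trivially to the flat class, after which the pairing descends to $\Lambda^{2}R^{\times}$. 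For $x=\sum_{\lambda}n_{\lambda}[\lambda]\in\cB_{3}(R)$ the defining condition of the Bloch group is precisely $\sum_{\lambda}n_{\lambda}\,\lambda\wedge(1-\lambda)=0$ in $\Lambda^{2}R^{\times}$, so the pairing gives $a(\rho)=\sum_{\lambda}n_{\lambda}\,\hat c(\lambda)\cup\hat c(1-\lambda)=0$ in $\hbK^{-2}(X)$. By exactness of $\bK^{-3}\xrightarrow{\beil}H^{-3}(\IDR)\xrightarrow{a}\hbK^{-2}$ the class $\rho$ lies in the image of $\beil$, and any preimage is the desired element $bl(x)\in\bK^{-3}(X)$.
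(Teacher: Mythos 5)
Your skeleton---the homomorphism $\hat c$, the cup pairing, vanishing of $I(\hat c(\lambda)\cup\hat c(1-\lambda))$ by the Steinberg relation, descent of the pairing to $R^{\times}\wedge R^{\times}$, and the diagram chase through the exact sequence \eqref{apr2503}---is the same as the paper's. The genuine gap is in the step you yourself flag as the heart of the matter, the secondary Steinberg relation, and your proposed proof of it would fail. First, your claim that $\rho(\lambda)$ is \emph{well defined} is false: over the number ring the equation $a(\rho(\lambda))=\hat c(\lambda)\cup\hat c(1-\lambda)$ determines $\rho(\lambda)$ only up to $\ker(a)=\im(\beil)$, which by Borel's theorem is a lattice of full rank in $H^{-3}(\IDR(X))\cong \R^{r_{2}}$; hence no computation carried out over $X$ alone can single out the representative $-\left(iD^{BW}(\sigma\lambda)\right)_{\sigma}$. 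Second, the transgression computation you sketch has nothing to integrate: since $X(\C)$ is zero-dimensional, the explicit Chern--Weil data of $\hcycl(\cV(\lambda),g(\lambda))$ on $S^{1}\times S^{1}\times X$ involves only the locally constant quantities $\log|\sigma(\lambda)|$ and $\arg(\sigma(\lambda))$ (times $dt$, $du$), so no fibrewise integral of these forms can produce the transcendental value $\Im\Li_{2}(\sigma(\lambda))$; and the trivialization of $c(\lambda)\cup c(1-\lambda)$ furnished by the Steinberg relation is a purely $K$-theoretic fact, not given by explicit geometry, so there is no ``associated secondary (transgression) form'' available. An integral of the type $\int \log(1-\lambda)\,d\log\lambda$ can only arise when $\lambda$ varies, which it does not on $\Spec(R)$.

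The idea your proposal is missing---and the actual content of the paper's proof---is to pass to the universal situation $\mathbb{X}:=\P^{1}_{\Z}\setminus\{0,1,\infty\}$, where $\lambda$ is a coordinate. There the curvature $R(\hat c(\lambda))\cup R(\hat c(1-\lambda))$ is a nonzero form, the secondary class must satisfy $d(\cD(\lambda)(2))=R(\hat c(\lambda))\cup R(\hat c(1-\lambda))$, and crucially $H^{-3}(\IDR(2)(\mathbb{X}))=0$ (see \eqref{aug0504}), so this differential equation determines $\cD(\lambda)(2)$ \emph{uniquely}. Comparison with $dD^{BW}(\lambda)=\log(|\lambda|)\,d\arg(1-\lambda)-\log(|1-\lambda|)\,d\arg(\lambda)$, which is where $d\Li_{2}$ legitimately enters, yields $\cD(\lambda)(2)=-iD^{BW}(\lambda)$; one then \emph{defines} $\cD$ on the number ring by pullback along the classifying map $\lambda\colon X\to\mathbb{X}$, which simultaneously fixes the choice within the Borel-lattice ambiguity and proves the stated formula. (Your side remark that descending the pairing to $\Lambda^{2}R^{\times}$ requires an argument about the $2$-torsion classes $\hat c(\lambda)\cup\hat c(\lambda)$ is a fair observation which the paper passes over quickly; but it is minor compared with the universal-case argument, without which the dilogarithm cannot appear at all.)
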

\begin{ex}
Assume that $n\in \nat$, $n\ge 2$ and $\lambda\in R^{*}$ satisfies
$$
\lambda^{n+1}-\lambda+1=0\ .
$$
Then {$\frac{1}{1-\lambda} \in R^{\circ}$ and we}
consider the element ${x:=n[\lambda]+ [\frac{1}{1-\lambda}] }\in \Z[R^{\circ}]$. We claim that $x\in \cB_{3}(R)$. Indeed,
\begin{eqnarray*}
n (\lambda\wedge (1-\lambda))+ \frac{1}{1-\lambda} \wedge(1-\frac{1}{1-\lambda}) &=&
n(\lambda\wedge (1-\lambda))+ \frac{1}{1-\lambda} \wedge
\frac{\lambda}{\lambda-1} \\
&=& \lambda^n\wedge (1-\lambda) + (1-\lambda) \wedge \frac{\lambda-1}{\lambda}\\
&=& \frac{\lambda -1}{\lambda} \wedge(1-\lambda) + (1-\lambda) \wedge \frac{\lambda-1}{\lambda} \\
&=& 0.
\end{eqnarray*}
We get an element $bl(x)\in \bK_{3}(R)$ such that
$$\beil(2)(bl(x))=(n+1) \left(-iD^{BW}(\sigma(\lambda))\right)_{\sigma\in\Spec(R)(\C)}\ ,$$
where we use that {$D^{BW}(\frac{1}{1-\lambda})=D^{BW}(\lambda)$}.  If $\sigma(\lambda)$ is not real, then $D^{BW}(\sigma(\lambda))$ is not zero.
\end{ex}
\begin{proof}[Proof of Theorem \ref{jdkdkdqwdqwdqwdwqd}]
Since $X(\C)$ is zero dimensional  we have
$H^{-2}(\IDR(X)) = 0$. Hence, by Lemma \ref{aug0501}, the map $\hat c\colon R^{\times} \to \hbK\oben{-1}(X)$ is a homomorphism. Since $\bigoplus_{k\in \Z} \hbK\oben{k}(X)$ is graded commutative, we get an induced map $R^{\times} \wedge R^{\times} \to \hbK\oben{-2}(X)$, $\lambda\wedge\mu \mapsto \hat c(\lambda) \cup \hat c(\mu)$.

If $\lambda\in R^{\circ}$, then the Steinberg relation implies that
\[
I(\hat c(\lambda) \cup \hat c(1- \lambda)) = c(\lambda)\cup c(1- \lambda) = 0 \text{ in } \bK^{-2}(X).
\] 
Consider the following commutative diagram with exact rows:
\begin{equation}\label{aug0505}
\begin{split}
\xymatrix@C+0.2cm{
0 \ar[r] & \cB_{3}(R) \ar[r] \ar@{..>}[d]_{bl} & \Z[R^{\circ}] \ar[r]^-{\lambda\mapsto \lambda\wedge(1-\lambda)} \ar@{..>}[d]_{\cD}& R^{\times} \wedge R^{\times} \ar[d]\\
0 \ar[r] & \bK^{-3}(X)/\ker(\beil) \ar[r]^{\beil} & \IDR^{-3}(X)/\im(d) \ar[r]^-{a} & \hbK\oben{-2}(X) \ar[r]^{I} & \bK^{-2}(X) 
}
\end{split}
\end{equation}
The dotted arrow $\cD$ exists by the Steinberg relation and since $\Z[R^{\circ}]$ is a free abelian group. The dotted arrow $bl$ is the induced map on kernels.

We will now pin down a specific choice for $\cD$ which will then imply the Theorem.
\newcommand{\X}{\mathbb{X}}
To do this, we consider the universal situation. Let 
\[
\X:=\P^{1}_\Z\setminus \{0,1,\infty\} \cong \Spec(\Z{[\lambda,\lambda^{-1},(1-\lambda)^{-1}]}) .
\]
We consider $\hat c(\lambda) \cup \hat c(1-\lambda) \in \hbK^{-2}(\X)$. Again, by the Steinberg relation there exists $\cD(\lambda)\in \IDR^{-3}(\X)/\im(d)$ such that $a(\cD(\lambda)) = \hat c(\lambda) \cup \hat c(1-\lambda)$.
Since $R\circ a = d$, we must have
\begin{equation}\label{aug0503}
d(\cD(\lambda)) = R(\hat c(\lambda)) \cup R(\hat c(1-\lambda)) \in \IDR^{-2}(\X).
\end{equation}
Because we want to specialize to number rings later on, we are only interested in the component $\cD(\lambda)(2) \in \IDR(2)^{-3}(\X)$ (see~\eqref{aug0502})
This is determined by \eqref{aug0503} up to elements in $H^{-3}(\IDR(2)(\X))$.
Since $\cF^{2}A(I\times \X(\C)) = 0$ we have  quasi-isomorphisms
\begin{align}\label{aug0504}
\begin{split}
\IDR(2)(\X) & \underset{q}{\simeq}  \DR(2)(\X) \\
&\cong \left(\Cone\left((2\pi i)^{2}A_{\R}(\X(\C)) \to A(\X(\C))\right)[3]\right)^{\Gal(\C/\R)}\\
&\cong \left((2\pi i)A_{\R}(\X(\C))[3]\right)^{\Gal(\C/\R)}\ ,
\end{split}
\end{align}
where the last isomorphism is induced by taking $i$ times the imaginary part on the second component of the cone.
In particular, $H^{-3}(\IDR(2)(\X)) = H^{0}(\X(\C), (2\pi i)\R)^{\Gal(\C/\R)} = 0$. 

We now compute the right-hand side of \eqref{aug0503}. From Lemma \ref{aug0501} we get
\begin{multline*}
i\Im\left(R(\hat c(\lambda)) \cup R(\hat c(1-\lambda))\right) \\ = id\arg(\lambda) \wedge d\log(|1-\lambda|^u) +i d\log(|\lambda|^u))\wedge d\arg(1-\lambda).
\end{multline*}
Hence, under the quasi-isomorphisms \eqref{aug0504}, $R(\hat c(\lambda)) \cup  R(\hat c(1-\lambda))$ is mapped to
\[
i\log(|1-\lambda|) d\arg(\lambda) -i \log(|\lambda|)d\arg(1-\lambda)  \in \left((2\pi i)A^{1}_{\R}(\X(\C))\right)^{\Gal(\C/\R)}\ .
\]
On the other hand, using $\frac{d}{dz}\Li_{2}(z) = \frac{1}{z} \Li_{1}(z) = -\frac{1}{z}\log(1-z)$ we get
\begin{align*}
dD^{BW}(\lambda) &=  \arg(1-\lambda)d\log(|\lambda|) + \log(|\lambda|)d\arg(1-\lambda) - \Im \log(1-\lambda) d\log(\lambda) \\
& = \log(|\lambda|)d\arg(1-\lambda) - \log(|1-\lambda|)d\arg(\lambda)\ .
\end{align*}
It follows that, under the quasi-isomorphisms \eqref{aug0504},
\[
\cD(\lambda)(2) = -iD^{BW}(\lambda).
\]

We now return to the number ring $R$. Note that in diagram \eqref{aug0505} we may identify 
\begin{align*}
\IDR^{-3}(X)/\im(d) &= H^{-3}(\IDR(2)(X)) \\
&\cong \left((2\pi i)A_{\R}^{0}(X(\C))\right)^{\Gal(\C/\R)} \\
& = \left[2\pi i\R^{X(\C)}\right]^{\Gal(\C/\R)}.
\end{align*}
Any $\lambda\in R^{\circ}$ corresponds to a unique morphism $\lambda\colon X \to \X$, which on $\C$-valued points is given by $X(\C) \to \X(\C)=\C^{\times}\setminus \{1\}$, $\sigma \mapsto \sigma(\lambda)$. We construct $\cD(\lambda) \in \left[2\pi i\R^{X(\C)}\right]^{\Gal(\C/\R)}$ by pulling back along $\lambda$ from the universal case on $\X$. Explicitly, we get
\[
\cD(\lambda) = \left(-iD^{BW}(\sigma(\lambda))\right)_{\sigma\in X(\C)}.
\] 
This implies the formula for $bl$ stated in the Theorem. 
\end{proof}

\section{A height invariant for number rings}\label{juni1003}

{ 
Let $R$ be the ring of integers in a number field. We recall the following definition from Arakelov
geometry:
\begin{ddd}
A metrized line bundle $(\cL,h^{\cL})$ on $\Spec(R)$ is an invertible sheaf $\cL$ on $\Spec(R)$ with
a $\Gal(\C/\R)$-invariant metric $h^{\cL}$ on its complexification. We let $\hPic(\Spec(R))$
denote the multiplicative group of isomorphism classes of metrized line bundles
under the tensor product and call it the arithmetic Picard group of $R$.
\end{ddd}
We may identify $\cL$ with its $R$-module of global sections. A metric $h^{\cL}$ is then given by a
collection of metrics $h^{\cL}_{\sigma}$ on $\cL\otimes_{R,\sigma} \C$ for all $\sigma\in
\Spec(R)(\C)$ which is invariant under the $\Gal(\C/\R)$-action.
}

{ 
An important invariant is the arithmetic degree 
\[
\hdeg\colon \hPic(\Spec(R))\to \R
\]
defined as follows
(see \cite[IV,§ 3]{MR969124}): Let $(\cL, h^{\cL})$ be a metrized line bundle.  Then
\begin{equation}\label{aug0101}
\hdeg((\cL,h^{\cL})) := \frac{1}{[K:\Q]}\left( \log(\# (\cL/R\cdot s)) -
\frac{1}{2}\sum_{\sigma\in\Spec(R)(\C)} \log(h_{\sigma}(s))\right)
\end{equation}
where $s\in  \cL\setminus \{0\}$ is any non-zero section.}

{ 
The main aim of this section is to explain how the arithmetic Picard group and the arithmetic
degree can be naturally understood in the framework of differential algebraic $K$-theory (see
Theorem \ref{thmArith}).}

\subsection{Scaling the metric}\label{may0801}

Let $M$ be a smooth manifold and $X\in \Reg_\Z$.
We consider a geometric bundle $( V,g)$, $g:=(h^{ V},\nabla^{II})$, on $M\times X$ and let $f\in
C^{\infty}(M\times  X(\C) )$ be a $\Gal(\C/\R)$-invariant positive smooth function.
Then we can consider the rescaled metric
$f h^{ V}$ and geometry $g_f:=( f h^{ V},\nabla^{II})$.  In the following we work with the canonical extensions $\can(g)$ ({see~}Definition \ref{jun2202}) of the geometries.
We are interested in the difference
$$\hcycl(V,\can(g_f) )-\hcycl(V,\can(g))\in \widehat{\bK}(X)^{0}(M)\ .$$
Note that this difference is equal to
$ a(\alpha)$ for some $\alpha\in \IDR^{-1}(M\times X){/\im(d)}$, where
  $\alpha$ is well-defined up to the image of $\beil$.
We {want to} calculate $\alpha$.  To this end we use the homotopy formula {\cite[Lemma 5.11]{buta}}. We 
consider the bundle
$\widehat{{V}}:=\pr^{*} {V}$, where  $\pr\colon [0,1]\times M\times X\to M\times X$
is the projection. It is   equipped 
with the  geometry $\hat g:=(\hat h,\pr^{*}\nabla^{II})$, 
$\hat h:=(1-x+xf)h$, where $x\in [0,1]$ is the coordinate.
By the homotopy formula we can take
$$ 
\alpha= {\int_{[0,1]\times [0,1]\times M\times X/[0,1]\times M\times X} R(\hcycl(\widehat V, \can(\hat g))) = }
 \int_{[0,1]\times [0,1]\times M\times X/[0,1]\times M\times X} {\widetilde \omega}(\can(\hat g) ) \ .
$$
For us, the most important case is {the following (see~\cite[Lemma 5.13]{buta}):
 \begin{lem}\label{jun2211}
{If $\dim(M)=0$ and $\dim(X(\C))=0$,}
 we can take 
$$\alpha=\alpha(1)= {-}\frac{{\rk(V)}}{2}\log(f) du\ .$$
 \end{lem}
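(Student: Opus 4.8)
The plan is to make the extended geometry completely explicit and thereby reduce the whole computation to a scalar integral over the two interval directions. Since $\dim(M)=0$ and $\dim(X(\C))=0$, the space $M\times X(\C)$ is a finite set of points, so both partial connections $\nabla^{I}$ and $\nabla^{II}$ vanish; consequently, in a frame $\{\pr^{*}e_{i}\}$ pulled back from a frame $\{e_{i}\}$ of $V$, the connection $\nabla=\nabla^{I}+\nabla^{II}$ has vanishing connection matrix. Moreover the original metric matrix $H=(h(e_{i},e_{j}))$ is locally constant, i.e.\ $dH=0$, so the only nonconstant datum entering the computation is the scaling factor $\phi:=1-x+xf$, where $x$ is the coordinate on the homotopy interval of the homotopy formula. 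Thus the rescaled metric on $\hat V=\pr^{*}V$ is represented by $\hat H=\phi H$.

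First I would identify the interpolating connection. As $\nabla$ has vanishing connection matrix and $dH=0$, the $\hat h$-adjoint $\nabla^{*}$ has connection matrix $\hat H^{-1}d\hat H=\phi^{-1}d\phi\cdot I=d\log\phi\cdot I$, which is real since $\phi$ is positive; hence the unitarization $\nabla^{u}=\tfrac12(\nabla+\nabla^{*})$ has connection matrix $\tfrac12 d\log\phi\cdot I$. The connection $\widetilde\nabla$ of the canonical extended geometry $\can(\hat g)$ is the linear path from $\nabla^{u}$ at $u=0$ to $\nabla$ at $u=1$ (Definition \ref{jun2202}), so in the frame $\{\pr^{*}e_{i}\}$ its connection matrix is $\widetilde\omega=\tfrac{1-u}{2}\,d\log\phi\cdot I$, where $u$ is the coordinate on the interval built into $\IDR$ and $d\log\phi=\tfrac{f-1}{1-x+xf}\,dx$, there being no other directions in which to differentiate.

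Next I would compute the curvature and the characteristic forms. Because $\widetilde\omega$ is a scalar multiple of the identity, $\widetilde\omega\wedge\widetilde\omega=0$, whence
\[
R^{\widetilde\nabla}=d\widetilde\omega=-\frac{1}{2}\,du\wedge d\log\phi\cdot I.
\]
Since the only $1$-form directions present are $du$ and $dx$, every product $(R^{\widetilde\nabla})^{p}$ with $p\ge 2$ vanishes for degree reasons, so in $\widetilde\omega(\can(\hat g))=\prod_{p}\ch_{2p}(\widetilde\nabla)$ only the terms $p=0$ and $p=1$ survive; explicitly $\ch_{0}=\rk(V)$ and $\ch_{2}(\widetilde\nabla)=-\Tr R^{\widetilde\nabla}=\tfrac{\rk(V)}{2}\,du\wedge d\log\phi$.

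Finally I would carry out the fibre integration over the homotopy interval. The degree-zero term $\rk(V)$ contains no $dx$ and integrates to zero, so $\alpha(0)=0$ and hence $\alpha=\alpha(1)$, together with the vanishing of the $p\ge 2$ terms. For the weight-one term, writing $d\log\phi=\tfrac{\phi'}{\phi}\,dx$ and using $\int_{0}^{1}\tfrac{\phi'(x)}{\phi(x)}\,dx=\log\phi(1)-\log\phi(0)=\log f$ yields $\alpha(1)=-\tfrac{\rk(V)}{2}\log(f)\,du$, which is the asserted formula. I expect the only real obstacle to be bookkeeping: one must fix the conventions for the adjoint and unitarized connections, the orientation sign in fibre integration, and the normalization of $\ch_{2p}$ consistently, and verify that the endpoint values of $\widetilde\nabla$ match $\nabla^{u}$ and $\nabla$ so that $\widetilde\omega(\can(\hat g))$ indeed lands in the subcomplex $\IDR$; once these are pinned down, the computation is exactly the elementary one above.
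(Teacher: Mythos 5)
Your proposal is correct and takes essentially the same route as the paper's proof: you explicitly derive the connection matrix $\tfrac{1-u}{2}\,d\log(1-x+xf)$ of the canonical extended geometry (which the paper simply states), observe that only the $p=0,1$ components of ${\widetilde\omega}$ survive for degree reasons with $\alpha(0)=0$, and integrate $\tfrac{\rk(V)}{2}\,du\wedge d\log\phi$ over the homotopy interval to get $-\tfrac{\rk(V)}{2}\log(f)\,du$. The only difference is that you spell out the computation of the adjoint and unitarized connections, which the paper leaves implicit, and this matches the conventions the paper uses elsewhere (e.g.\ in the proof of Lemma \ref{aug0501}).
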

 \begin{proof}
 We have
  ${\widetilde \omega}(\hat g)(p)=0$ for all $p$ except $p=0,1$.  
 In fact we have 
$$
{\widetilde \omega}(\hat g)(0) \equiv {\rk(V)},
$$
{hence $\alpha(0) = 0$.}
 In order to calculate ${\widetilde \omega}(\hat g)(1)$, we first observe that 
$$
{\widetilde \nabla} =d+ {\frac{1-u}{2}}  \frac{(f -1)dx}{1+(f -1)x}\ .
$$
 We get
$$
{\widetilde \omega}(\hat h)(1) = \frac{{\rk(V)}}{2}\frac{(f -1)}{1+(f -1)x}du\wedge dx
$$
and therefore
\[
\alpha= \alpha(1)= {-}\frac{{\rk(V)}}{2}\log(f) du\ . \qedhere
\]
\end{proof}

\subsection{The absolute height for number rings}

We consider a ring of integers $R$ in a number field {$K$}. Note that
$\Spec(R)$ is regular, separated and  of finite type over $\Spec(\Z)$.
We define the multiplicative subgroup
$$
\bK^{0}(\Spec(R))_{(1)}:=\{x\in \bK^{0}(\Spec(R))\:|\: 1-x \:\:\mbox{is nilpotent}\}
$$
of the {group of} units in the ring  ${\bK^{0}(\Spec(R))}$. It is known that
$$\bK^{0}(\Spec(R))\cong \Z\oplus \Cl(R)\ ,$$
where $\Cl(R)$ denotes the finite class group.
Therefore
$$\bK^{0}(\Spec(R))_{(1)}\cong \{1+x\:|\: x\in \Cl(R)\}\cong \Cl(R)$$
is finite.
We furthermore define
$$
\hbK\oben{0}(\Spec(R))_{(1)}:=I^{-1}(\bK^{0}(\Spec(R))_{(1)})\subseteq \hbK\oben{0}(\Spec(R))\ . 
$$
If $x\in \widehat{\bK}^{0}(\Spec(R))_{(1)}$, then necessarily $R(x)=R(\beins)$. Hence we have an exact sequence
\begin{equation}\label{jul0501}
0\to  H^{-1}(\IDR (\Spec(R)))/\im(\beil) \xrightarrow{1+a} \hbK\oben{0}(\Spec(R))_{(1)}\to
\bK^{0}(\Spec(R))_{(1)} \to 0\ .
\end{equation}

 We now define an absolute height function 
$$
h\colon \hbK\oben{0}(\Spec(R))_{(1)} \to  \R
$$
for number rings $R$. 
{We will relate $h$ with the arithmetic degree of metrized line bundles in the next
subsection.}
 
Note that 
$$
H^{-1}(\IDR(\Spec(R)))\cong H^{-1}(\IDR(1)(\Spec(R)))   \cong
[\R^{\Spec(R)(\C)}]^{\Gal(\C/\R)}\ .
$$
Explicitly, a class $[\alpha]\in  H^{-1}(\IDR(1)(\Spec(R)))$ which is represented by 
$$\alpha\in \IDR(1)^{-1}({\Spec(R)}) \subseteq {A}^{1}([0,1]\times \Spec(R)(\C))$$
corresponds to the function
\begin{equation}\label{jun2210}
\Spec(R)(\C)\ni \sigma  \mapsto \Re(  \int_{[0,1]} {\sigma^*}\alpha)\in \R\ .
\end{equation}
We define a linear map
$$
s\colon  [\R^{\Spec(R)(\C)}]^{\Gal(\C/\R)}\to \R\ , \quad
s(f):=\frac{1}{{[K:\Q]}}\sum_{\sigma\in \Spec(R)(\C)}  f(\sigma)\ .$$
Then $s\circ \beil(1)=0$. 
In this way we get a homomorphism
\begin{equation}\label{jul0601}
{h}\colon H^{-1}(\IDR (\Spec(R)))/\im(\beil)\to \R\ , \quad  {h}([f]):=s(f)\ .
\end{equation}
In view of \eqref{jul0501} and since $\bK^{0}(\Spec(R))_{(1)}$ is finite, the homomorphism \eqref{jul0601} has a unique extension to  
$\widehat{\bK}^{0}(\Spec(R))_{(1)}$. Explicitly, if $x\in \widehat{\bK}^{0}(\Spec(R))_{(1)}$, then
there exists $N\in \nat $ such that $x^{N}=1+a(f)$
for some $f\in
H^{-1}(\IDR(\Spec(R)))$ {and ${h}(x)$} is given by 
$${h}(x)=\frac{1}{N} {h}(1+a(f))\ .$$


%

\subsection{{The degree of metrized line bundles}}

We  let $R$ be the ring of integers in a number field $K$. We
consider the trivial bundle ${\cV} := \cO_{\Spec(R)}$ with the canonical geometry $g_0 $. Then
$$
\hcycl(\cV,\can(g_0))=\beins\ .
$$
Let $f\colon\Spec(R)(\C)\to \R^{+}$ be $\Gal(\C/\R)$-invariant and form the geometry with rescaled
metric $g_{0,f}$ {as in \ref{may0801}}. Then
$$
\hcycl(\cV,\can(g_{0,f}))\in \hbK\oben{0}(\Spec(R))_{(1)}\ .
$$ 
\begin{lem}\label{jun2220}
We have 
$$
{h}(\hcycl(\cV,\can(g_{0,f})))= {-}\frac{1}{2{[K:\Q]}}\sum_{\sigma\in \Spec(R)(\C)}  
{\log}(f(\sigma))\ .
$$
\end{lem}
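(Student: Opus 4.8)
The plan is to reduce the statement entirely to Lemma \ref{jun2211} together with the explicit description of the height function $h$ given by \eqref{jun2210} and \eqref{jul0601}. First I would apply Lemma \ref{jun2211} to the bundle $\cV=\cO_{\Spec(R)}$, the geometry $g_0$, and the rescaling function $f$. Its hypotheses are met here: the manifold is a point, so $\dim(M)=0$, and $\dim(\Spec(R)(\C))=0$ because $R$ is a ring of integers in a number field. Since $\cV$ has rank one, the lemma yields
\[
\hcycl(\cV,\can(g_{0,f}))-\hcycl(\cV,\can(g_0))=a(\alpha),\qquad \alpha=\alpha(1)=-\tfrac{1}{2}\log(f)\,du,
\]
where $u$ is the coordinate on $I=[0,1]$ and $\alpha\in\IDR(1)^{-1}(\Spec(R))$.

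Next I would use that $\hcycl(\cV,\can(g_0))=\beins$, which gives $\hcycl(\cV,\can(g_{0,f}))=\beins+a(\alpha)$. Applying $I$ shows that the underlying $K$-theory class is the unit $1\in\bK^0(\Spec(R))$, so the element lies in $\hbK\oben{0}(\Spec(R))_{(1)}$ and is already in the image of the map $1+a$ from the exact sequence \eqref{jul0501}. In particular, in the defining formula for the extension of $h$ one may take $N=1$, so that $h(\hcycl(\cV,\can(g_{0,f})))=h(\beins+a(\alpha))=s([\alpha])$.

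Finally I would trace the identification \eqref{jun2210}: the class $[\alpha]\in H^{-1}(\IDR(\Spec(R)))$ corresponds to the function $\sigma\mapsto\Re\bigl(\int_{[0,1]}\sigma^{*}\alpha\bigr)$. Since $\sigma^{*}\alpha=-\tfrac{1}{2}\log(f(\sigma))\,du$ has integrand constant in $u$, this integral equals $-\tfrac{1}{2}\log(f(\sigma))$, which is already real as $f$ is positive real-valued, so the real part is harmless. Inserting this into the definition of $s$ in \eqref{jul0601} produces exactly $-\tfrac{1}{2[K:\Q]}\sum_{\sigma}\log(f(\sigma))$, as claimed. I do not expect a genuine obstacle: the entire mathematical content sits in Lemma \ref{jun2211}, and the only care required is to verify that its hypotheses apply, that $N=1$ here, and that the identification \eqref{jun2210} is followed correctly so that the fibre integration over $[0,1]$ and the real part interact as expected with the constant-in-$u$ integrand.
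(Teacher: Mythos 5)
Your proposal is correct and is exactly the paper's argument: the paper's proof consists of the single line ``Use \eqref{jun2210} and Lemma \ref{jun2211}'', and you have simply carried out those two steps in detail (rank one, $\alpha=-\tfrac12\log(f)\,du$, the observation that one may take $N=1$ since the underlying class is $\beins$, and the evaluation of $s([\alpha])$ via \eqref{jun2210}). All the verifications you flag, including the applicability of Lemma \ref{jun2211} and the constancy of the integrand in $u$, check out.
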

\begin{proof} Use \eqref{jun2210} and Lemma \ref{jun2211}. 
\end{proof}


If $(\cL,h^{\cL})\in \hPic(\Spec(R))$, then we have a canonical extended geometry $\can(h^{\cL})$ on
$\cL$ and can form 
$$
\hat c(\cL,h^{\cL}):=\hcycl(\cL,\can(h^{\cL}))\in \hbK\oben{0}(\Spec(R))_{(1)}\
.$$
{
\begin{theorem}\label{thmArith}
The map $\hat c\colon \hPic(\Spec(R)) \to \hbK\oben{0}(\Spec(R))_{(1)}$ is an isomorphism.
Furthermore, for any metrized line bundle $(\cL, h^\cL)$ we have
\[
\hdeg(\cL,h^{\cL})=  h(\hat c(\cL,h^{\cL}))\ .
\]
\end{theorem}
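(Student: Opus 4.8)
The plan is to realize both groups as extensions of the finite class group $\Cl(R)$ by a quotient of $[\R^{\Spec(R)(\C)}]^{\Gal(\C/\R)}$ and to exhibit $\hat c$ as a morphism of these extensions. On the arithmetic side, forgetting the metric gives a surjection $\hPic(\Spec(R))\to\Cl(R)$ whose kernel consists of metrics on the trivial bundle $\cO$ modulo isometry: a metric is $f\cdot h_0$ with $h_0(1)=1$ and $\log f\in[\R^{\Spec(R)(\C)}]^{\Gal(\C/\R)}$, and multiplication with $u\in R^\times$ identifies $\log f$ with $\log f-2\log|\sigma(u)|$. Hence I obtain a short exact sequence
\[
0\to [\R^{\Spec(R)(\C)}]^{\Gal(\C/\R)}/\langle 2\log|\sigma(u)|\rangle_{u\in R^\times}\to \hPic(\Spec(R))\to \Cl(R)\to 0,
\]
which I will compare with \eqref{jul0501} via the identifications $H^{-1}(\IDR(\Spec(R)))\cong[\R^{\Spec(R)(\C)}]^{\Gal(\C/\R)}$ and $\bK^0(\Spec(R))_{(1)}\cong\Cl(R)$.

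First I would check that $\hat c$ is well defined and lands in $\hbK\oben{0}(\Spec(R))_{(1)}$: it depends only on the isometry class because $\hcycl$ and $\can$ are functorial, it is a homomorphism because the geometric cycle map is multiplicative (as already used in Lemma \ref{aug0501}), and $I(\hat c(\cL,h^{\cL}))=\cycl(\cL)$ lies in $\bK^0(\Spec(R))_{(1)}$ since $\cL$ has rank one and the augmentation ideal $\Cl(R)\subset\bK^0(\Spec(R))$ is square-zero for a Dedekind ring. This identity $I\circ\hat c=\cycl$ is the commutativity of the right-hand square, with right vertical map the canonical isomorphism $\Cl(R)=\Cl(R)$. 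For the left-hand square I would restrict to scaled trivial bundles $(\cO,f h_0)$: by the homotopy computation of Lemma \ref{jun2211} one has $\hat c(\cO,\can(g_{0,f}))=1+a(\alpha)$ with $\alpha$ corresponding, under \eqref{jun2210}, to $-\tfrac12\log f$. Thus $\hat c$ induces on kernels the map $\phi\mapsto-\tfrac12\phi$.

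To conclude, via the five lemma, that $\hat c$ is an isomorphism, I must identify $\im(\beil)$ on $\bK^{-1}(\Spec(R))$ with the unit lattice. Here I would use that $K_1(R)=R^{\times}$ (that is, $SK_1(R)=0$ for rings of integers), that the map $c$ of \eqref{eq:c} realizes this identification, and the compatibility $\beil\circ I=[R(-)]$: integrating the curvature $R(\hat c(\lambda))$ of Lemma \ref{aug0501} over $[0,1]$ as prescribed by \eqref{jun2210} kills the $d\arg$-term and the $X$-direction part, leaving $\beil(c(\lambda))=(\log|\sigma(\lambda)|)_{\sigma}$. Therefore $\im(\beil)=\langle\log|\sigma(u)|\rangle_{u\in R^{\times}}$, and multiplication by $-\tfrac12$ carries the lattice $\langle 2\log|\sigma(u)|\rangle$ isomorphically onto it, so the left vertical map is an isomorphism of quotients.

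For the degree formula I would observe that both $\hdeg$ and $h\circ\hat c$ are homomorphisms $\hPic(\Spec(R))\to\R$. On the kernel $V$ of $\hPic(\Spec(R))\to\Cl(R)$ they agree: for $(\cO,f h_0)$ Lemma \ref{jun2220} gives $h(\hat c(\cO,\can(g_{0,f})))=-\tfrac{1}{2[K:\Q]}\sum_{\sigma}\log f(\sigma)$, while taking the section $s=1$ in \eqref{aug0101} yields exactly the same value for $\hdeg(\cO,f h_0)$, since $\cO/R\cdot 1=0$. Hence $\hdeg-h\circ\hat c$ vanishes on $V$, factors through the finite group $\Cl(R)$, and, landing in the torsion-free group $\R$, must be zero. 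I expect the main obstacle to be the normalization bookkeeping of the third paragraph: pinning down the signs and the factor $-\tfrac12$ so that the Beilinson regulator on $K_1(R)$ matches the classical Dirichlet regulator and the two kernel lattices correspond.
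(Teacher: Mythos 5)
Your proof is correct and follows essentially the same route as the paper: both realize $\hat c$ as a map of extensions comparing $0\to H^{-1}(\IDR(\Spec(R)))/\im(\beil)\to \hPic(\Spec(R))\to \Pic(\Spec(R))\to 0$ with the sequence \eqref{jul0501}, conclude by the five lemma using Lemma \ref{jun2211} for the left square, and verify the degree formula on the kernel before extending by the finiteness of $\Cl(R)$ (the paper absorbs your factor $-\tfrac{1}{2}$ into the parametrization $h^{(\lambda)}_{\sigma}(1)=\exp(-2\lambda_{\sigma})$, making its left vertical map the identity). Your explicit identification of $\im(\beil)$ with the unit lattice via $SK_1(R)=0$ and the curvature computation of Lemma \ref{aug0501} merely spells out what the paper dismisses as the ``straightforward'' exactness of the upper row.
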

}
\begin{proof}
{
Since all connections involved are trivial, we have $\can(h^{\cL}\otimes h^{\cL'}) = \can(h^{\cL})
\otimes \can(h^{\cL'})$. Thus $\hat c$ is a group homomorphism.
}

{ 
There is a natural map $[\R^{\Spec(R)(\C)}]^{\Gal(\C/\R)} \to \hPic(\Spec(R))$ which sends the tuple
$\lambda=(\lambda_{\sigma})$ to the trivial line bundle $R$ with the metric $h^{(\lambda)}$ given by
$h^{(\lambda)}_{\sigma}(1) = \exp(-2\lambda_{\sigma})$. Recall that $H^{-1}(\IDR(\Spec(R)))   \cong
[\R^{\Spec(R)(\C)}]^{\Gal(\C/\R)}$.
We claim that we have a commutative diagram with exact rows
\[
\xymatrix{
0 \ar[r] & H^{-1}(\IDR(\Spec(R)))/\im(\beil) \ar[r] \ar@{=}[d] & \hPic(\Spec(R)) \ar[d]^{\hat c}
\ar[r] & \Pic(\Spec(R)) \ar[d]^{\cong} \ar[r] & 0\\
0 \ar[r] & H^{-1}(\IDR(\Spec(R)))/\im(\beil) \ar[r] & \hbK\oben{0}(\Spec(R))_{(1)} \ar[r] &
\bK^{0}(\Spec(R))_{(1)} \ar[r] & 0.
}
\]
Indeed, the right vertical map is given by the
topological cycle map, and it is known to be an
isomorphism. The exactness of the upper row is straightforward, the lower row is \eqref{jul0501}.
Finally, the commutativity of the left hand square follows from Lemma \ref{jun2211}. 
}

{In particular, $\hat c$ is an isomorphism.}

{ 
For the second assertion, it suffices
by the construction of $h$ to check that for $\lambda=(\lambda_{\sigma}) \in
[\R^{\Spec(R)(\C)}]^{\Gal(\C/\R)}$ we have
\[
\hdeg(R,h^{(\lambda)}) = \frac{1}{{[K:\Q]}}\sum_{\sigma\in \Spec(R)(\C)} \lambda_{\sigma}.
\]
But this is clear from the definition of $h^{(\lambda)}$ and  \eqref{aug0101} with $s=1$. 
}
\end{proof}

\section{Formality of the algebraic $K$-theory of number rings}\label{jun1004}

{
Let $M\R$ be the Moore spectrum of $\R$. For any spectrum $E$, we use the notation $E\R:= E\wedge
M\R$ for its realification.
}

Let $E\in \CAlg(\Sp^{\wedge})$ be a commutative ring spectrum.
Then we can form the differential graded commutative algebra  $ \pi_*(E\R)\in
\CAlg(\Ch^{\otimes})$ {with} trivial differentials. There is a unique
  equivalence class of maps 
$$
{r}\colon E\to H({\pi_*(E\R)})
$$ of spectra which induces the canonical realification map in homotopy. 
\begin{ddd}
The commutative ring spectrum  $E$ is called formal over $\R$ if ${r}$ can be refined to a 
morphism of
commutative
ring spectra.
\end{ddd}

If $\pi_*(E\R)$ is a free commutative $\R$-algebra, then $E$ is formal over $\R$ (see \cite{skript} for an argument). This applies e.g.~to complex bordism $\mathbf{MU}$ or connective complex $K$-theory $\mathbf{ku}$. From the formality of $\mathbf{ku}$ one can deduce the formality over $\R$ of
periodic complex $K$-theory $\mathbf{KU}$.

More generally, let $E\in \Fun(S, \CAlg(\Sp^{\wedge}))$ be a diagram of commutative
ring
spectra. It gives rise to a diagram $\pi_{*}(E\R)\in \Fun(S,\CAlg(\Ch^{\otimes}))$ of chain
complexes with trivial differential.
\begin{ddd}\label{fkwelfwefwefewfewfefe}
We say that $E$ is formal over $\R$ if there exists an equivalence
$E\R\simeq H(\pi_{*}(E\R))$ of diagrams of commutative ring spectra which {induces} the
identity on homotopy.
\end{ddd}

We let $\bS\subseteq \Reg_{\Z}$ be the full subcategory  whose objects are 
 spectra of rings of integers in number fields.
\begin{theorem}\label{juni1001}
The restriction of {the sheaf of algebraic $K$-theory spectra} $\bK$ to $\bS$ is formal over
$\R$.
\end{theorem}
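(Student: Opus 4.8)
The plan is to reduce the statement to a purely algebraic intrinsic-formality property of the homotopy ring, which is forced by a parity count, and then to promote this from the objectwise level to the diagram indexed by $\bS$. First I would compute homotopy. Since $\R$ is a $\Q$-vector space, the Moore spectrum $M\R$ is equivalent to $H\R$, so $\bK(R)\R$ is naturally a commutative $H\R$-algebra and $\pi_*(\bK(R)\R)=K_*(R)\otimes\R$. By Borel's theorem, for $R\in\bS$ one has $K_0(R)\otimes\R=\R$ (the class group being finite), $K_{2i}(R)\otimes\R=0$ for all $i\ge 1$, and the remaining nonzero groups $K_{2i-1}(R)\otimes\R$ sit in odd positive degrees. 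Thus $\pi_*(\bK(R)\R)$ is concentrated in degree $0$, where it is $\R$, and in odd positive degrees. In particular, as a graded-commutative $\R$-algebra it is a square-zero extension $\R\oplus V_R$ of $\R$ by the ideal $V_R=\bigoplus_{n\text{ odd}}K_n(R)\otimes\R$, since the product of two positive-degree classes lands in a positive even degree, where the homotopy vanishes. Note that this is the only input from number theory; the regulator $\beil$ is not needed for the formality argument beyond identifying the ranks.

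Next I would establish formality objectwise. Working under the equivalence between commutative $H\R$-algebras and commutative differential graded $\R$-algebras (valid in characteristic $0$), pass to the minimal $C_\infty$-model on the cohomology $\pi_*(\bK(R)\R)$ produced by homotopy transfer; its higher products $m_n\colon(\pi_*)^{\otimes n}\to\pi_*$ for $n\ge 3$ have degree $2-n$. Any term containing the unit $1\in\R=\pi_0$ vanishes by strict unitality, while on a tensor of $n$ classes from the odd ideal $V_R$ the output has total degree equal to a sum of $n$ odd numbers plus $2-n$, hence even and $\ge 2$; but $\pi_*(\bK(R)\R)$ vanishes in positive even degrees. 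Therefore $m_n=0$ for all $n\ge 3$, the transferred $C_\infty$-structure is the trivial one, and $\bK(R)\R\simeq H(\pi_*(\bK(R)\R))$. In other words, a graded $\R$-algebra concentrated in degree $0$ (where it is $\R$) and in odd degrees is \emph{intrinsically} formal, the parity count ruling out all potential higher products and Massey products because their targets are zero.

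The remaining and genuinely harder point is naturality in $R$: I must construct the equivalence $\bK|_\bS\R\simeq H(\pi_*(\bK|_\bS\R))$ in the functor $\infty$-category $\Fun(\bS,\CAlg(\Mod_{H\R}))$, not just objectwise. I would run the same obstruction theory in this functor category, building the comparison inductively along the Postnikov tower of the diagram. Because $\pi_*(\bK|_\bS\R)$ is objectwise concentrated in degree $0$ and odd degrees, the Postnikov layers occur only in those degrees, and the successive obstructions to splitting the tower multiplicatively lie in diagram-level topological Andr\'e--Quillen cohomology groups whose coefficients are the homotopy modules placed in a positive even internal degree. The decisive observation is that this vanishing is governed by the internal homotopy degree and is therefore insensitive to the extra functor-cohomology directions coming from the morphisms of $\bS$: every obstruction group is assembled from copies of $\pi_m$ with $m$ even and positive, hence is zero. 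This yields the coherent multiplicative splitting and the asserted formality of the diagram. The main obstacle is exactly this rigidification step, namely setting up the diagram-level obstruction theory carefully enough that the parity vanishing of the objectwise higher products persists functorially over $\bS$; the pointwise formality of the previous paragraph is comparatively soft.
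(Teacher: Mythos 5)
Your objectwise argument is fine: with $\pi_*(\bK(\Spec(R))\R)$ equal to $\R$ in degree $0$ and concentrated in odd positive degrees, the parity count on transferred higher products $m_n$ ($n$ odd inputs plus degree shift $2-n$ lands in positive even degree, where homotopy vanishes) does give intrinsic formality of each $\bK(\Spec(R))\R$ separately. But the actual content of the theorem is naturality in $R$, and that is exactly where your argument has a genuine gap. Your decisive claim --- that the diagram-level obstruction groups over $\bS$ ``are assembled from copies of $\pi_m$ with $m$ even and positive, hence zero,'' so that the vanishing is ``insensitive to the extra functor-cohomology directions'' --- is unjustified and in general false. Derivation/Andr\'e--Quillen spectra for a diagram are limits over $\bS$ of the objectwise ones, so the obstruction groups carry a local-to-global spectral sequence with $E_2$-terms $H^s(\bS;\,\underline{\mathrm{AQ}}^{t})$ converging to total degree $s+t$. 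The functor-cohomology degree $s$ shifts the internal degree: an obstruction in the (even) total degree singled out by your parity count receives contributions from objectwise groups in degree $t=\text{total}-s$, which can be odd, and the odd-degree objectwise homotopy is precisely where $\bK\R$ is nonzero. Since $\bS$ is a large category with nontrivial functor cohomology, nothing forces these $\lim^s$-contributions to vanish, and your proposal gives no mechanism for it. So the rigidification step you yourself flag as the hard point is not established.

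It is worth noting that the paper sidesteps this obstruction-theoretic problem entirely, and in a way that contradicts your remark that ``the regulator $\beil$ is not needed beyond identifying the ranks.'' The multiplicative regulator $\beil\colon \bK \to H(\IDR)$ of Theorem \ref{may2760} --- the main construction of the paper --- is already a morphism of \emph{diagrams} (indeed sheaves) of commutative ring spectra, so no coherence has to be built by hand. The proof then exhibits an explicit strictly commutative natural model for the target: the subalgebra $\R\oplus M'(R)\subset \IDR(\Spec(R))$ spanned by the canonical representatives $x(\sigma)_{1-2p}$ (multiples of $i^{p+1}dt$ on the interval factor), whose inclusion is a quasi-isomorphism of CDGAs natural in $R$; products of the generators vanish for the elementary reason that $dt\wedge dt=0$. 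Borel's theorem is then used in a stronger form than in your proposal: not only to identify ranks, but to know that \eqref{eq:borel} is \emph{injective} with image $\ker(p)$, after which the canonical splitting $M'(R)\to M(R)$, natural in $R$, turns $\beil\wedge M\R$ into the desired equivalence $\bK\R_{|\bS}\simeq H(\pi_*(\bK_{|\bS}))$ of diagrams of commutative ring spectra. In short: the paper trades your diagram-level obstruction theory for a globally defined multiplicative map into an explicitly formal natural target, and without such a map (or a genuine proof of the vanishing you assert) your argument does not close.
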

\begin{proof}
We first show that the restriction of $H(\IDR)$ to $\bS$ is formal over $\R$. To this
end we describe, for every ring of integers $R$ in a number field $K$,  canonical 
representatives of the cohomology of $\IDR(\Spec(R))$. 
We have 
$$
\IDR(\Spec(R))(p) \cong \left(\{\omega\in {A(I)[2p]} \:|\: \omega_{|\{0\}} \in (2\pi i)^{p}\R\ , \quad \omega_{|\{1\}}=0 \}^{\Spec(R)(\C)}\right)^{\Gal(\C/\R)}
$$
for $p\ge 1$,
and 
$$
\IDR(\Spec(R))(0) \cong \left(\{\omega\in {A(I)} \:|\: \omega_{|\{0\}} \in  \R   \}^{\Spec(R)(\C)}\right)^{\Gal(\C/\R)}\ .
$$
We have 
$$
H^{*}\left(\left\{\omega\in {A(I)[2p]}\:\big|\: \omega_{|\{0\}} \in (2\pi i)^{p}\R\ , \quad
\omega_{|\{1\}}=0 \right\}\right)\cong 
{\begin{cases}
i^{p+1}\R, & *=-2p+1, \\ 
0, & \text{else,}
\end{cases}}
$$
and
$$
H^{*}\left(\left\{\omega\in A(I)\:\big|\: \omega_{|\{0\}} \in  \R    \right\}\right)\cong 
{\begin{cases}
\R, & *=0, \\ 
0, & \text{else.}
\end{cases}}
$$
Explicit representatives of generators are given by
 $i^{p+1}dt$ (with $t$ the coordinate of $I$) in the first case and $1$ in the second.
For real embeddings $\sigma \in \Spec(R)(\C)$ and odd $p\in \nat$, and for complex embeddings
$\sigma \in \Spec(R)(\C)$ and all $p\in \nat_{>0}$, we define the following elements in
$\IDR(\Spec(R))(p)$: For real $\sigma$,
\[
x(\sigma)_{1-2p}:=\left(\Spec(R)(\C)\ni \sigma^{\prime} \mapsto 
{\begin{cases}
i^{p+1}dt, &\sigma' =\sigma, \\
0, & \text{else,}
\end{cases}}
\right)\in \IDR(\Spec(R))(p) 
\]
and for complex $\sigma$  
\[
x(\sigma)_{1-2p}:=\left(\Spec(R)(\C)\ni \sigma^{\prime} \mapsto
{\begin{cases}
i^{p+1}dt, & \sigma' =\sigma, \\
(-1)^{p+1} i^{p+1} dt, & \sigma' = \bar\sigma, \\
0, & \text{else,}
\end{cases}}
\right)\in \IDR(\Spec(R))(p).   
\]
We let $M^{\prime}(R)\subseteq \IDR(\Spec(R))$ 
be the $\R$-submodule generated by elements $x(\sigma)_{1-2p}$ for $\sigma$ and $p$ as above. 

%
It is easy to see that the inclusion
$$
H^{*}(\IDR(\Spec(R)))\cong  \R\oplus M^{\prime}(R)\subset  \IDR(\Spec(R))
$$
is a quasi-isomorphism of commutative differential graded algebras which is natural in $R$. 
We therefore get a morphism of {diagrams of} ring spectra
$$
\beil\colon \bK_{|\bS} \to H(\IDR)_{|\bS}  \simeq H(H^{*}(\IDR)_{|\bS})\ .
$$
By Theorem \ref{may2760} the induced map 
\begin{equation}\label{eq:borel}
\pi_*(\bK_{|\bS})\otimes \R\to  H^{-*}(\IDR_{|\bS})
\end{equation}
coincides with Beilinson's regulator, which itself coincides up to a factor of $2$ with Borel's
regulator map \cite[Theorem~10.9]{burgos}. By Borel's results \cite{borel}, \eqref{eq:borel}
is injective, and the image is the kernel of the map
$$
p\colon \R\oplus M^{\prime}(R)\to \R\ , \quad b\mapsto \sum_{\sigma\in \Spec(R)(\C)}
n(\sigma)_{-1}(b)\ ,
$$
where the $n(\sigma)_{-1}(b)$ are the coefficients of $b$ in front of the generators $x(\sigma)_{-1}$. We define the  subspace $M(R):=\ker(p)\cap M^{\prime}(R) $.
Then we can define a canonical splitting 
$$
M^{\prime}(R)\to M(R)\ , \quad 
b\mapsto b-\frac{p(b)}{[{K}:\Q]}\sum_{\sigma\in \Spec(R)(\C)} x(\sigma)_{-1}\ .
$$
It induces a canonical ring homomorphism
$\R\oplus M^{\prime}(R)\to \R\oplus M(R)$ which is left-inverse to the inclusion $\R\oplus
M(R){\hookrightarrow} \R\oplus M^{\prime}(R)$ and therefore a map of diagrams of ring spectra
$s\colon H(\R\oplus M^{\prime})\to H(\R\oplus M)$ such that the composition
$$\bK\R_{|\bS} \xrightarrow{\beil\wedge M\R} H(\R\oplus M^{\prime})\stackrel{s}{\to} H(\R\oplus M) \simeq H(\pi_{*}(\bK_{|\bS}))$$
is an equivalence of diagrams of commutative ring spectra. 
\end{proof}

Observe that the structure of the homotopy groups of $\bK(\Spec(R))\R$ implies that all Massey
products are trivial. This can be considered as an $A_{\infty}$-version of formality. The additional
information given by Theorem \ref{juni1001} is that $\bK(\Spec(R))$
is formal in the commutative sense and in a way which is natural in the ring $R$.

\bibliographystyle{amsalpha}
\bibliography{zweite}

\providecommand{\bysame}{\leavevmode\hbox to3em{\hrulefill}\thinspace}
\providecommand{\MR}{\relax\ifhmode\unskip\space\fi MR }
\providecommand{\MRhref}[2]{%
  \href{http://www.ams.org/mathscinet-getitem?mr=#1}{#2}
}
\providecommand{\href}[2]{#2}
\begin{thebibliography}{GGN13}

\bibitem[Be{\u\i}86]{BeilinsonHodgeCohom}
A.~A. Be{\u\i}linson, \emph{Notes on absolute {H}odge cohomology}, Applications
  of algebraic {$K$}-theory to algebraic geometry and number theory, {P}art
  {I}, {II} ({B}oulder, {C}olo., 1983), Contemp. Math., vol.~55, Amer. Math.
  Soc., Providence, RI, 1986, pp.~35--68. \MR{862628 (87m:14019)}

\bibitem[BG02]{burgos}
J.~I. Burgos~Gil, \emph{The regulators of {B}eilinson and {B}orel}, CRM
  Monograph Series, vol.~15, American Mathematical Society, Providence, RI,
  2002. \MR{1869655 (2002m:19002)}

\bibitem[BG13]{2013arXiv1306.0247B}
U.~Bunke and D.~Gepner, \emph{{Differential function spectra, the differential
  Becker-Gottlieb transfer, and applications to differential algebraic
  K-theory}}, \href{http://arxiv.org/abs/1306.0247}{arXiv:1306.0247}, 2013.

\bibitem[Blo00]{Bloch-HigherReg}
S.~Bloch, \emph{Higher regulators, algebraic {$K$}-theory, and zeta functions
  of elliptic curves}, CRM Monograph Series, vol.~11, American Mathematical
  Society, Providence, RI, 2000. \MR{1760901 (2001i:11082)}

\bibitem[BNV13]{bnv}
U.~Bunke, T.~Nikolaus, and M.~V\"olkl, \emph{Differential cohomology theories
  as sheaves of spectra},
  \href{http://arxiv.org/abs/1311.3188}{arxiv:1311.3188}, 2013.

\bibitem[Bor74]{borel}
A.~Borel, \emph{Stable real cohomology of arithmetic groups}, Ann. Sci. \'Ecole
  Norm. Sup. (4) \textbf{7} (1974), 235--272 (1975). \MR{0387496 (52 \#8338)}

\bibitem[BT15]{buta}
U.~Bunke and G.~Tamme, \emph{Regulators and cycle maps in higher-dimensional
  differential algebraic {$K$}-theory}, Adv. Math. \textbf{285} (2015),
  1853--1969. \MR{3406543}

\bibitem[Bun12]{skript}
U.~Bunke, \emph{Differential cohomology}, Course notes, Regensburg,
  \href{http://arxiv.org/abs/1208.3961}{arXiv:1208.3961}, 2012.

\bibitem[BW98]{BurgosWang}
J.~I. Burgos and St. Wang, \emph{Higher {B}ott-{C}hern forms and {B}eilinson's
  regulator}, Invent. Math. \textbf{132} (1998), no.~2, 261--305. \MR{1621424
  (99j:14008)}

\bibitem[GGN13]{2013arXiv1305.4550G}
D.~{Gepner}, M.~{Groth}, and T.~{Nikolaus}, \emph{{Universality of
  multiplicative infinite loop space machines}},
  \href{http://arxiv.org/abs/1305.4550}{arXiv:1305.4550}, 2013.

\bibitem[Lan88]{MR969124}
S.~Lang, \emph{Introduction to {A}rakelov theory}, Springer-Verlag, New York,
  1988. \MR{969124 (89m:11059)}

\bibitem[Lur09]{MR2522659}
J.~Lurie, \emph{Higher topos theory}, Annals of Mathematics Studies, vol. 170,
  Princeton University Press, Princeton, NJ, 2009. \MR{2522659 (2010j:18001)}

\bibitem[Lur14]{HA}
\bysame, \emph{{Higher algebra}}, Prepublication book draft. Available from
  \url{http://www.math.harvard.edu/~lurie/}, 2014.

\bibitem[Zag07]{MR2290758}
D.~Zagier, \emph{The dilogarithm function}, Frontiers in number theory,
  physics, and geometry. {II}, Springer, Berlin, 2007, pp.~3--65. \MR{2290758
  (2008h:33005)}

\end{thebibliography}
\end{document}